\theoremstyle{plain}
\newtheorem{theorem}{Theorem}[section]
\newtheorem{lemma}[theorem]{Lemma}
\newtheorem{corollary}[theorem]{Corollary}
\newtheorem{proposition}[theorem]{Proposition}
\theoremstyle{definition}
\newtheorem{definition}[theorem]{Definition}
\newtheorem{definition-theorem}[theorem]{Definition-Theorem}
\theoremstyle{remark}
\newtheorem{remark}[theorem]{Remark}
\numberwithin{equation}{section}
\setlist[itemize]{leftmargin=*}
\newcommand{\surj}{\to\kern-1.8ex\to}
\newcommand{\Conf}{\mathrm{Conf}}
\newcommand{\Sol}{\mathrm{Sol}}
\newcommand{\cR}{\mathcal{R}}
\newcommand{\frf}{\mathfrak{f}}
\newcommand{\frh}{\mathfrak{h}}
\newcommand{\cI}{\mathcal{I}}
\newcommand{\dd}{\mathrm{d}}
\setlist[itemize]{leftmargin=*}
\begin{document}

\title[Three-dimensional compact Heterotic solitons with parallel torsion]{Three-dimensional compact Heterotic solitons with parallel torsion}

\author[Andrei Moroianu]{Andrei Moroianu}
\address{Université Paris-Saclay, CNRS,  Laboratoire de mathématiques d'Orsay, 91405, Orsay, France, and Institute of Mathematics “Simion Stoilow” of the Romanian Academy, 21 Calea Grivitei, 010702 Bucharest, Romania}
\email{andrei.moroianu@math.cnrs.fr}

\author[Miguel Pino Carmona]{Miguel Pino Carmona} 
\author[C. S. Shahbazi]{C. S. Shahbazi} \address{Departamento de Matem\'aticas, Universidad UNED - Madrid, Reino de Espa\~na}
\email{mpino185@alumno.uned.es}
\email{cshahbazi@mat.uned.es}

\thanks{This research was supported by the Oberwolfach Research Fellows program at the Mathematisches Forschungsinstitut Oberwolfach (MFO). We are grateful to MFO for its hospitality and for providing a stimulating research environment. A.M. was partly supported by the PNRR-III-C9-2023-I8 grant CF 149/31.07.2023 {\em Conformal Aspects of Geometry and Dynamics}.  C.S.S. was partially supported by the research grant PID2023-152822NB-I00 of the Ministry of Science of the government of Spain.}

\begin{abstract}
We obtain a rigidity result for compact three-dimensional Heterotic solitons with parallel non-trivial torsion. We show that they are either hyperbolic three-manifolds or compact quotients of the Heisenberg group equipped with a left-invariant metric. In particular, the latter arise both as solitons with completely skew-symmetric torsion as well as with non-vanishing twistorial component.  As a corollary, we obtain the universal bound $-24$ for the scalar curvature of Heterotic solitons with parallel skew-symmetric torsion, which prevents it from being arbitrarily large. 
\end{abstract}

\keywords{Heterotic geometry, metric connections with torsion, solitons, supergravity.}
\subjclass[2020]{Primary: 53C25. Secondary: 53C20}

\maketitle
\setcounter{tocdepth}{1} %doesn't display subsections in TOC 
%\tableofcontents

%%%%%%%%%%%%%%%%%%%%%%%%%%%%%%%%%%%%%%%%%%%%%%%%%%%%%%%%%%%%%%%%%%%%%%%%%%%%
%%%%%%%%%%%%%%%%%%%%%%%%%%%%%%%%%%%%%%%%%%%%%%%%%%%%%%%%%%%%%%%%%%%%%%%%%%%%
%%%%%%%%%%%%%%%%%%%%%%%%%%%%%%%%%%%%%%%%%%%%%%%%%%%%%%%%%%%%%%%%%%%%%%%%%%%%
%%%%%%%%%%%%%%%%%%%%%%%%%%%%%%%%%%%%%%%%%%%%%%%%%%%%%%%%%%%%%%%%%%%%%%%%%%%%
 
\section{Introduction}
\label{sec:intro}

%%%%%%%%%%%%%%%%%%%%%%%%%%%%%%%%%%%%%%%%%%%%%%%%%%%%%%%%%%%%%%%%%%%%%%%%%%%%
%%%%%%%%%%%%%%%%%%%%%%%%%%%%%%%%%%%%%%%%%%%%%%%%%%%%%%%%%%%%%%%%%%%%%%%%%%%%
%%%%%%%%%%%%%%%%%%%%%%%%%%%%%%%%%%%%%%%%%%%%%%%%%%%%%%%%%%%%%%%%%%%%%%%%%%%%
%%%%%%%%%%%%%%%%%%%%%%%%%%%%%%%%%%%%%%%%%%%%%%%%%%%%%%%%%%%%%%%%%%%%%%%%%%%%

The Heterotic soliton system is a system of differential equations, depending on a positive real parameter $\kappa$, that originates in Heterotic string theory \cite{BRI, BRII, Papadopoulos:2024tgs} and was proposed in \cite{Moroianu:2021kit,Moroianu:2023jof} as a natural gauge-theoretic differential system in Riemannian geometry with torsion. The Heterotic soliton system, which we present in detail in Definition \ref{def:motionsugratorsion} below, couples a Riemannian metric $g$ to a closed 1-form $\varphi$, a 3-form $H$, and a metric connection $D$ with (not necessarily skew-symmetric) torsion. The connection $D$ appears in the system through a natural quadratic term in its curvature tensor, which is one of the main novelties of the system. The Heterotic soliton system includes, as notable particular cases, the celebrated Hull-Strominger system \cite{GarciaFernandez,Hull,Picard,Strominger} and the Heterotic $\mathrm{G}_2$ \cite{Marisa,Friedrich:2001nh} and $\mathrm{Spin}(7)$ \cite{Marisa,Ivanov} systems with trivial gauge bundle. In fact, the Heterotic soliton system can be understood as the system of \emph{integrability conditions} associated with these spinorial differential systems \cite{Gillard:2004xq}. However, solutions to the Heterotic soliton system can be remarkably more general, as proven in \cite{Moroianu:2023jof}, where the first examples of non-supersymmetric, not locally supersymmetric, Heterotic compactification backgrounds were constructed in four dimensions. Alternatively, the Heterotic soliton system can be considered as a \emph{corrected} generalized Ricci soliton system \cite{FernandezStreetsLibro,Oliynyk} via the inclusion of the quadratic curvature term prescribed by Heterotic string theory. As illustrated and explained in \cite{StreetsSoliton,StreetsUstinovskiySoliton,StreetsUstinovskiySolitonII}, the generalized Ricci soliton system promises to have important applications in the geometrization and classification of compact complex surfaces, which would be interesting to understand in the context of the Heterotic soliton system. The primary objective of this article is to classify the three-dimensional compact solutions of the Heterotic soliton system with non-trivial, parallel, but not necessarily skew-symmetric, torsion. Our classification results can be summarized in terms of the following \emph{rigidity} result. If $(g,\varphi,H,D)$ is a Heterotic soliton with non-trivial parallel torsion on a compact three-manifold $M$ (cf. Definition \ref{def:motionsugratorsion} below), then $(M,g)$ is a Riemannian manifold of the following type: 
\begin{itemize}
\item A compact quotient of the three-dimensional Heisenberg group equipped with a left-invariant metric.

\item An Einstein manifold of strictly negative curvature (i.e. up to homothety, a compact quotient of the hyperbolic space $\mathbb{H}^3$).
\end{itemize}

\noindent
The reader is referred to Theorems \ref{thm:Sasaki} and \ref{thm:skewtorsion} for more details. The proof involves Bochner-type identities together with repeated use of the maximum-minimum principle and a detailed analysis of the higher curvature terms involved in the Heterotic soliton system, which require extensive manipulations. As a corollary to this classification, we obtain the following normalized \emph{universal} bound for three-dimensional compact Heterotic solitons of hyperbolic type.
\begin{corollary}
\label{cor:boundcomplete}
Let $(g,\varphi,H,D)$ be a Heterotic soliton with non-trivial parallel torsion on a compact three-manifold $M$. If $(M,g)$ is hyperbolic then:
\begin{eqnarray*}
 -24 < \kappa s_g < 0\, . 
\end{eqnarray*}
\end{corollary}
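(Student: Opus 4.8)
The corollary follows directly from the classification of the hyperbolic case established in Theorem \ref{thm:skewtorsion}, so the plan is to read off the bound from the explicit description of the soliton produced there rather than to run an independent argument. The upper bound is immediate: if $(M,g)$ is hyperbolic then $g$ is Einstein with $\mathrm{Ric}^g = \tfrac{s_g}{3}g$ and constant scalar curvature $s_g < 0$, while $\kappa > 0$ is part of the data of a Heterotic soliton (Definition \ref{def:motionsugratorsion}); hence $\kappa s_g < 0$.

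For the lower bound I would exploit that, in dimension three, the only parallel $3$-forms are the constant multiples of the Riemannian volume form, so the parallel torsion can be written as $H = \mu\,\vol_g$ with $\mu \in \RR$ and $|H|^2 = \mu^2$ (and such an $H$ is automatically parallel, since $\vol_g$ is). Feeding $g$ Einstein together with this expression for $H$ into the scalar (trace) components of the Heterotic soliton equations of Definition \ref{def:motionsugratorsion} collapses the system to a set of algebraic relations among the invariants $s_g$, $\mu^2$, $|\varphi|^2$ and $\kappa$. The key point is that, in this homogeneous situation, the curvature $R^D$ of the torsion connection is completely determined by $s_g$ and $\mu$, so the quadratic curvature term contributes a fully explicit scalar.

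Combining these relations, the plan is to solve for $\kappa s_g$ and exhibit it in the form $\kappa s_g = -24 + (\text{manifestly non-negative terms})$, the non-negative part being controlled by $|\varphi|^2 \ge 0$ and a square in $\mu$; the strictness $-24 < \kappa s_g$ then comes precisely from the hypothesis that the torsion is non-trivial, i.e. $\mu \neq 0$, which forbids the degenerate boundary value. Equivalently, one may view the hyperbolic solitons as a one-parameter family and compute $\kappa s_g$ as a function along it, its range being exactly the open interval $(-24,0)$.

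The main obstacle is the evaluation of the higher-curvature quadratic term: one must compute the relevant self-contraction of $R^D$ for the metric connection with skew torsion $\mu\,\vol_g$ on a negatively curved Einstein three-manifold, and then track the coefficient arithmetic carefully enough to land on the specific constant $-24$ and to confirm that the remaining terms have a definite sign. This is the point where the ``detailed analysis of the higher curvature terms'' invoked in the introduction is unavoidable, and where a sign or normalization error would destroy the sharp value of the bound.
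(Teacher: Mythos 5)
Your overall strategy --- deduce the bound from the classification in Theorem \ref{thm:skewtorsion} and read it off from the explicit algebraic constraint produced there --- is the same as the paper's, and your upper bound argument is fine. However, the sketch of the lower bound has a genuine gap. You conflate the flux $H$ with the torsion of the auxiliary connection $D$. The hypothesis ``parallel torsion'' refers to the contorsion $\mathbb{A}$ of $D$ (Equation \eqref{eq:parallelicityA}), which is a section of $\wedge^1M\otimes\wedge^2M$ and need not be totally skew-symmetric: by Lemma \ref{lemma:cases} it can have a non-trivial twistorial or symmetric part (case (A) of Remark \ref{remark:FinalCases}). Your claim that ``the only parallel $3$-forms are constant multiples of $\vol_g$, so the parallel torsion is $\mu\,\vol_g$'' therefore begs the question; one must first rule out the generic reducible case for hyperbolic $M$, which the paper does via Theorem \ref{thm:Sasaki} (case (A) forces $(M,g)$ to be a quotient of the Heisenberg group, hence never Einstein). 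Moreover $H=\frh\,\nu_g$ and the skew torsion parameter $\alpha$ of $D$ are \emph{independent} data, and the constancy of $\frh$ (equivalently $\varphi=0$) is not automatic from parallelism of the torsion of $D$; it is the content of Proposition \ref{prop:constantf}.

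This conflation is not cosmetic. The sharp constant $-24$ comes from the constraint \eqref{eq:constrainthf}, namely $\kappa(\frh^2+12\alpha^2)^2=48\,\frh^2$, combined with $2s_g=-\frh^2$: the strict inequality $\kappa\frh^2<48$ follows precisely because $\alpha\ne0$, while $\kappa\frh^2>0$ follows from $\frh\ne0$. If the two parameters are identified into a single $\mu$, the same constraint collapses to one isolated value of $\kappa s_g$ rather than the full open interval $(-24,0)$, so the one-parameter family you invoke disappears. Finally, the ``manifestly non-negative terms'' in your intended decomposition cannot include $|\varphi|^2$, since $\varphi=0$ for all such solitons; proving this vanishing is part of the work, not slack to be absorbed. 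In short, you have located the correct theorem to cite, but the mechanism that actually produces $-24$ --- the two-parameter algebraic constraint and the strictness coming from $\alpha\ne0$ --- is absent from your argument as written.
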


\noindent
Here $\kappa$ is a positive real constant (typically called the \emph{Regge slope} in the physics literature \cite{Polchinski:1998rq}), which from our point of view becomes a parameter of the system. Hence, the scalar curvature of a hyperbolic Heterotic soliton with parallel torsion is bounded in $\kappa^{-1}$ \emph{units}. Interestingly enough, this bound does not depend on the underlying compact hyperbolic three-manifold $M$. It would be interesting to understand if the allowed interval $(-24,0)$ for $\kappa s_g$ is preserved in the case of hyperbolic Heterotic solitons of not necessarily parallel torsion, and more generally when taking into account further higher order corrections consistently \cite{Melnikov:2014ywa}.\medskip 

\noindent
Note that compact quotients of the Heisenberg group occur as solutions of the Heterotic soliton system with both a connection $D$ with fully skew-symmetric torsion and a connection $D$ with non-zero twistorial component. In the former case, such a Heterotic soliton has a scalar curvature satisfying $2\kappa s_g = -1$, while in the latter, it can be chosen arbitrarily. Hence, the previous corollary can be refined as follows.

\begin{corollary}
\label{cor:boundcompleteII}
Let $(g,\varphi,H,D)$ be a Heterotic soliton with connection $D$ having non-trivial skew-symmetric torsion. Then:
\begin{eqnarray*}
 -24 < \kappa s_g < 0\, . 
\end{eqnarray*}
\end{corollary}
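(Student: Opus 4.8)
The plan is to deduce this refinement directly from the rigidity classification together with the bound already obtained for the hyperbolic case. Under the standing hypotheses of compactness and parallelism in dimension three, Theorems \ref{thm:Sasaki} and \ref{thm:skewtorsion} leave exactly two possibilities for the underlying geometry $(M,g)$: a compact quotient of $\mathbb{H}^3$, or a compact quotient of the Heisenberg group equipped with a left-invariant metric. The strategy is therefore to treat these two cases separately and to verify that $\kappa s_g$ lands inside the interval $(-24,0)$ in each of them.

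The hyperbolic case requires no new work: whenever $(M,g)$ is hyperbolic, Corollary \ref{cor:boundcomplete} already furnishes $-24 < \kappa s_g < 0$, and this applies \emph{a fortiori} to solitons whose torsion happens to be skew-symmetric. The substantive step is thus the Heisenberg case. Here I would exploit the fact that on an oriented three-manifold the torsion $3$-form is forced to be $H = \lambda\,\vol_g$, with $\lambda$ constant by parallelism, and feed the explicit left-invariant data into the Heterotic soliton equations. Solving the resulting algebraic relations --- crucially, with the twistorial part of the torsion set to zero, which is precisely the skew-symmetric hypothesis --- should pin the scalar curvature to the single value determined by $2\kappa s_g = -1$, i.e. $\kappa s_g = -\tfrac{1}{2}$. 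Since $-24 < -\tfrac{1}{2} < 0$, the bound holds, and combining the two cases completes the argument.

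The main obstacle I anticipate is isolating the precise role of the skew-symmetric hypothesis in the Heisenberg computation. For these nilmanifolds the scalar curvature is free to range once a nonzero twistorial component is allowed, so the delicate point is to verify that switching off that component collapses this freedom to the unique normalization $2\kappa s_g = -1$. This amounts to tracking how the twistorial piece enters the quadratic curvature term prescribed by the system and confirming that its vanishing forces the stated relation; once that is in hand, the inclusion $-\tfrac{1}{2}\in(-24,0)$ is immediate and the corollary follows.
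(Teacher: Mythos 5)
Your proposal is correct and follows essentially the route the paper intends: the corollary is an immediate consequence of Theorem \ref{thm:skewtorsion}, which already establishes the dichotomy (hyperbolic with $\kappa s_g\in(-24,0)$, or Heisenberg with $2\kappa s_g=-1$) under the skew-symmetric hypothesis, and in both cases $\kappa s_g$ lies in $(-24,0)$. The Heisenberg computation you defer as the ``substantive step'' is already carried out in the proof of that theorem (the algebraic system \eqref{eq:algebraicHeisenberg} forcing $4\kappa\alpha^2=1$ and $\kappa s_g=-\tfrac12$), so nothing further needs to be verified.
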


\noindent
The bound $-24$ can be saturated by a Heterotic soliton of vanishing torsion on an Einstein manifold of scalar curvature $\kappa s_g = -24$. Understanding the physical reasons for the occurrence of the bound $-24$ is beyond the scope of this paper and is left for experts in the field.  \medskip

\noindent
Note that in this article, we only consider Heterotic solitons with parallel \emph{non-trivial} torsion. The case of \emph{vanishing} torsion is, surprisingly enough, remarkably more complicated and will be considered in a forthcoming publication.

%%%%%%%%%%%%%%%%%%%%%%%%%%%%%%%%%%%%%%%%%%%%%%%%%%%%%%%%%%%%%%%%%%%%%%%%%%%%

\section{Preliminaries}
\label{sec:formulae}

%%%%%%%%%%%%%%%%%%%%%%%%%%%%%%%%%%%%%%%%%%%%%%%%%%%%%%%%%%%%%%%%%%%%%%%%%%%%

Let $M$ be an oriented $d$-dimensional manifold $M$ equipped with a Riemannian metric $g$. We denote by $\langle \cdot , \cdot\rangle_g$ the determinant inner product induced by $g$ on the exterior algebra bundle of $M$ (which differs by a factor $\tfrac1{k!}$ from the tensor scalar product on $\wedge^kM$), and by $\vert \cdot \vert_g$ its associated norm. For every metric connection $D$ on $(M,g)$, we will denote by $\cR^{D}$ its curvature tensor, which in our conventions is defined by the following formula:
\begin{equation}\label{rd}
\cR^{D}_{X,Y} Z = D_{X} D_{Y} Z - D_{Y} D_{X} Z - D_{[X,Y]} Z
\end{equation}

\noindent
for every vector fields $X , Y, Z\in \mathfrak{X}(M)$. We understand the curvature tensor $\cR^{D}$ of $D$ as a section of $\wedge^2 M\otimes \wedge^2 M$, upon identification of skew-symmetric endomorphisms with 2-forms, using the Riemannian metric. In our conventions the norm of $\cR^{D}$ is explicitly given by:
\begin{eqnarray*}
\vert \cR^{D} \vert_g^2 : = \langle \cR^{D} , \cR^{D} \rangle_g = \frac{1}{2} \sum_{i,j=1}^d \langle \cR^{D}_{e_i,e_j} , \cR^{D}_{e_i,e_j} \rangle_g = \frac{1}{4}\sum_{i,j,k,l=1}^d \cR^{D}_{e_i,e_j}(e_k,e_l)\, \cR^{D}_{e_i,e_j}(e_k,e_l)
\end{eqnarray*}

\noindent
in terms of any local orthonormal frame $e_i$, where $\cR^{D}_{e_i,e_j} = e_j \lrcorner e_i\lrcorner \cR^{D}$ denotes evaluation of $e_i$ and $e_j$ in the \emph{first factor} $\wedge^2 M$ of $\cR^{D}$ and $\cR^{D}(e_i,e_j)$ denotes evaluation of $e_i$ and $e_j$ on the \emph{second factor} $\wedge^2 M$ of $\cR^{D}$. When $D = \nabla^g$ is the Levi-Civita connection of $(M,g)$, we set $\cR^g := \cR^D$. We define a symmetric bilinear form naturally associated to the curvature tensor of $D$ by:
\begin{equation}\label{rdrd}
(\cR^{D}\circ_g \cR^{D})(X,Y) := \langle X\lrcorner \cR^{D}, Y\lrcorner \cR^{D}\rangle_g = \frac{1}{2}\sum_{i,j,k=1}^d \cR^{D}_{X , e_i}(e_j,e_k) \cR^{D}_{Y e_i}(e_j, e_k)\, , 
\end{equation} 

\noindent
for every $X, Y \in \mathfrak{X}(M)$. This defines, associated to every metric connection $D$, a symmetric two-form $\cR^{D}\circ_g \cR^{D}\in \Gamma(T^{\ast}M\odot T^{\ast}M)$, quadratic in the curvature of $D$ that defines one of the most important terms appearing in the Heterotic soliton system.   Furthermore, associated with the curvature tensor of $D$, we define the 4-form:
\begin{equation*}
\langle \cR^{D}\wedge \cR^{D}\rangle_g = \frac{1}{2} \sum_{i,j=1}^d \cR^{D}(e_i,e_j)\wedge \cR^{D}(e_i,e_j)\, ,
\end{equation*}

\noindent
obtained by taking the wedge product on the \emph{first factor}  and the norm of the \emph{second factor} of $\cR^{D}$ in $\wedge^2 M\otimes \wedge^2 M$. Given such a connection $D$ on $TM$, for every $k,l\in \mathbb{N}$ we denote by:
\begin{equation*}
\dd_D \colon \Omega^k(M)\otimes\Omega^l(M)\to \Omega^{k+1}(M)\otimes\Omega^l(M)    
\end{equation*}

\noindent
the \emph{covariant} exterior derivative associated to $D$. Its formal adjoint is denoted by:
\begin{eqnarray*}
\dd_D^{\ast} \colon \Omega^{k+1}(M)\otimes\Omega^l(M)\to \Omega^{k}(M)\otimes\Omega^l(M)    
\end{eqnarray*}

\noindent
and plays a fundamental role in the definition of the Heterotic soliton system. For every 3-form $H\in \Omega^3(M)$, we define the symmetric bilinear form:
\begin{eqnarray*}
(H\circ_g H)(X,Y) :=   \langle X \lrcorner H , Y\lrcorner H\rangle_g = \frac{1}{2} \sum_{i,j=1}^d H(X,e_i,e_j) H(Y,e_i,e_j)\, , \quad \forall \ X, Y \in TM\, .
\end{eqnarray*}

\noindent
Similarly, we define:
\begin{equation*}
(\mathrm{Ric}^g\circ_g\mathrm{Ric}^g)(X,Y) := \langle \mathrm{Ric}^g(X) , \mathrm{Ric}^g(Y) \rangle_g\, , \qquad \forall \ X, Y \in TM\, ,
\end{equation*}

\noindent
where $\mathrm{Ric}^g$ denotes the Ricci tensor of $g$. If $M$ is three-dimensional, then the Riemannian tensor $\cR^g$ of $g$ can be written in terms of its Ricci tensor $\mathrm{Ric}^g$ as follows:
\begin{equation}
\label{eq:appR3d}
\cR^g_{X,Y} = \tfrac12{s_g} X \wedge Y + Y\wedge \mathrm{Ric}^g(X) + \mathrm{Ric}^g(Y)\wedge X\, , \qquad \forall \ X, Y \in TM\, ,
\end{equation}

\noindent
where $s_g$ is the scalar curvature of $g$. In particular, using the previous formula, it can be seen that the contraction $\cR^g\circ_g \cR^g$, is given by:
\begin{equation}
\label{eq:appRR3d}
\cR^g\circ_g \cR^g =  -  \mathrm{Ric}^g\circ \mathrm{Ric}^g +  s_g \mathrm{Ric}^g + (\vert \mathrm{Ric}^g\vert^2_g - \tfrac12{s_g^2}) g\, ,
\end{equation}

\noindent
where we have defined:
\begin{equation*}
\mathrm{Ric}^g\circ_g \mathrm{Ric}^g(X,Y) := g(\mathrm{Ric}^g(X),\mathrm{Ric}^g(Y))\, , \qquad  \forall\ X, Y\in TM\, .
\end{equation*}

\noindent
In particular, the norm of $\cR^g$ is given by $\vert \cR^g \vert_g^2 = \tfrac{1}{2} \mathrm{Tr}_g(\cR^g\circ_g \cR^g)  =  \vert \mathrm{Ric}^g \vert_g^2 - \tfrac{1}{4} s_g^2$. Given a three-form $H\in \Omega^3(M)$ on $M$, we denote by:
\begin{equation*}
\nabla^{g,H} = \nabla^g + \tfrac{1}{2}H\, ,
\end{equation*}

\noindent
the unique metric connection with completely skew-symmetric torsion $H$. The Ricci tensor of such a connection, which appears explicitly in the Heterotic soliton system, will be denoted by $\mathrm{Ric}^{g,H}\in \Gamma(T^{\ast}M\otimes T^{\ast}M)$. A computation gives the following formula for $\mathrm{Ric}^{g,H}$:
\begin{eqnarray}
\label{eq:Ricgh}
\mathrm{Ric}^{g,H} = \mathrm{Ric}^g - \tfrac{1}{2} H\circ_g H + \tfrac{1}{2} \delta^g H\, ,
\end{eqnarray}

\noindent
where $\delta^g$ is the formal adjoint of the exterior derivative.

%%%%%%%%%%%%%%%%%%%%%%%%%%%%%%%%%%%%%%%%%%%%%%%%%%%%%%%%%%%%%%%%%%%%%%%%%%%%

\section{The Heterotic soliton system}
\label{sec:HS}

%%%%%%%%%%%%%%%%%%%%%%%%%%%%%%%%%%%%%%%%%%%%%%%%%%%%%%%%%%%%%%%%%%%%%%%%%%%%

We proceed to introduce the Heterotic soliton system. Our notation and conventions are explained in the previous section. 
\begin{definition}
\label{def:motionsugratorsion}
Let $\kappa > 0$ be a non-negative real constant. The \emph{Heterotic soliton system} on a manifold $M$ is the following system of partial differential equations:
\begin{eqnarray}
& \mathrm{Ric}^{g,H} +  \nabla^{g,H}\varphi  + \kappa\, \cR^{D} \circ_g\cR^{D} = 0 \, , \qquad  \dd^{\ast}_D\cR^{D} + \varphi\lrcorner\cR^{D} = 0 \nonumber \\ 
& \delta^g\varphi + |\varphi|^2_g  - |H|^2_g  +  \kappa\, |\cR^{D}|^2_{g} = 0 \label{eq:motionsugratorsion}	
\end{eqnarray}
	
\noindent
together with the \emph{Bianchi identity}:
\begin{equation}
\label{eq:BianchiIdentity}
\dd H + \kappa \langle \cR^{D}\wedge \cR^{D}\rangle_g = 0
\end{equation}
	
\noindent
for tuples $(g,\varphi,H,D)$, where $g$ is a Riemannian metric on $M$, $\varphi\in \Omega^1(M)$ is a closed 1-form, $H\in \Omega^3(M)$ is a 3-form, and $D$ is a connection on $TM$ compatible with $g$. A \emph{Heterotic soliton} is a tuple $(g,\varphi, H,D)$ satisfying the Heterotic soliton system.
\end{definition}

\noindent
In the previous definition, and throughout the article, we are identifying, for every given tuple $(g,\varphi,H,D)$, vectors and 1-forms via the Riemannian metric $g$. Furthermore, as explained in Section \ref{sec:formulae}, $\varphi\lrcorner \cR^D$ denotes the contraction with $\varphi$ in the first $\wedge^2 M$ component of $\cR^D\in\wedge^2 M\otimes\wedge^2 M$.
\begin{remark}
The Bianchi identity \eqref{eq:BianchiIdentity} is not an \emph{identity} but an equation that needs to be solved. The terminology comes from the physics community and is nowadays standard also in the mathematics community.
\end{remark}

\noindent
Combining the first and third equations in \eqref{def:motionsugratorsion}, we obtain the following  equation, which does not depend on $\kappa$ and is useful for applications:
\begin{equation}
\label{eq:dilatontrace}
s_g = 3 \delta^g \varphi + 2\vert\varphi\vert^2_g - \tfrac{1}{2} \vert H\vert^2_g\, . 
\end{equation}

\noindent
The Heterotic soliton system corresponds to the equations of motion of bosonic Heterotic supergravity with a trivial gauge bundle at first order in the string slope parameter $\kappa$ \cite{BRI, BRII}. The symmetric part of the first equation in \eqref{eq:motionsugratorsion} is the \emph{Einstein equation} of the system, whereas its skew-symmetric projection is typically referred to as the \emph{Maxwell equation}. On the other hand, the second equation in \eqref{eq:motionsugratorsion} is the \emph{Yang-Mills equation} for the auxiliary connection $D$, whereas the third equation in \eqref{eq:motionsugratorsion} is commonly called the \emph{dilaton equation}. We will denote the configuration space and solution space of the Heterotic soliton system by $\Conf(M)$ and $\Sol_{\kappa}(M)$, respectively. We will refer to the variable $D$ as the \emph{auxiliary connection} of the given Heterotic soliton. 

\begin{remark}
\label{remark:H0}
Note that if $M$ is compact and $H=0$, then integrating the dilaton equation in \eqref{eq:motionsugratorsion} we obtain that $\varphi = 0$, $D$ is flat and $g$ is Ricci flat. We define such Heterotic soliton as being \emph{trivial}. 
\end{remark}

\noindent
Equations \eqref{eq:motionsugratorsion}	 and \eqref{eq:BianchiIdentity}	define the Heterotic soliton system in any dimension. In the following, we will assume that the dimension of $M$ is 3 and $M$ is oriented. This dimensional assumption simplifies the system notably, as the Bianchi identity \eqref{eq:BianchiIdentity} becomes trivially satisfied, the Riemann tensor is completely determined by the Ricci tensor, and other natural tensorial identifications hold. In particular, we will denote the elements of the configuration space $(g,\varphi, H,D) \in \Conf(M)$ simply by $(g,\varphi,\frh, D)\in \Conf(M)$, where $\mathfrak{h} := \ast_g H\in C^{\infty}(M)$. This allows us to easily integrate the Maxwell equation of the system, namely the skew-symmetric part of the Einstein equation, the first equation in \eqref{eq:motionsugratorsion}.

\begin{lemma}
\label{lemma:Maxwellintegrated}
Let $(g,\varphi,\frh, D)$ be a non-trivial Heterotic soliton on a compact connected three-manifold $M$. Then, there exists a function $\phi \in C^{\infty}(M)$ such that $\varphi = \dd \phi$ and $\frh = c\, e^{\phi}$ for a non-zero constant $c\in \mathbb{R}^{\ast}$. In particular, $\frh$ is constant if and only if $\varphi = 0$ identically on $M$.
\end{lemma}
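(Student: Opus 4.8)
The plan is to extract the asserted relation between $\frh$ and $\varphi$ directly from the skew-symmetric part of the first equation in \eqref{eq:motionsugratorsion} — the Maxwell equation — and then to integrate it using the connectedness of $M$ together with the non-triviality hypothesis.

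First I would isolate the skew-symmetric part of the Einstein equation. Among the three terms $\mathrm{Ric}^{g,H}$, $\nabla^{g,H}\varphi$ and $\kappa\,\cR^{D}\circ_g\cR^{D}$, the last is symmetric by its very definition \eqref{rdrd}, and, using formula \eqref{eq:Ricgh}, the only skew-symmetric contribution of $\mathrm{Ric}^{g,H}$ is $\tfrac12\delta^g H$, since $\mathrm{Ric}^g$ and $H\circ_g H$ are symmetric. For the term $\nabla^{g,H}\varphi$ I would write $\nabla^{g,H}=\nabla^g+\tfrac12 H$ and observe that $\nabla^g\varphi$ is symmetric precisely because $\varphi$ is closed (its antisymmetrisation is $\tfrac12\dd\varphi=0$), so that the whole skew-symmetric part comes from the torsion twist and equals $-\tfrac12\,\varphi\lrcorner H$. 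Collecting terms, the Maxwell equation reduces to $\delta^g H=\varphi\lrcorner H$.

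Next I would specialise to dimension three, writing $H=\frh\,\vol_g$ with $\frh=\ast_g H$. Applying the Hodge star and the identities $\varphi\lrcorner H=\frh\,\ast_g\varphi$ and $\delta^g H=\pm\ast_g\dd\frh$, the Maxwell equation becomes the pointwise first-order relation $\dd\frh=\frh\,\varphi$ on $M$ (the sign being fixed by our conventions). This is the crucial reduction: a single scalar linear equation coupling $\frh$ to $\varphi$.

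It then remains to integrate this relation. I would first argue that $\frh$ is nowhere vanishing: along any smooth curve $\gamma$ the relation $\dd\frh=\frh\,\varphi$ becomes the linear homogeneous ODE $\tfrac{d}{dt}(\frh\circ\gamma)=\varphi(\dot\gamma)\,(\frh\circ\gamma)$, so by uniqueness a single zero of $\frh$ would force $\frh\equiv 0$ on the path-connected manifold $M$; but then $H=0$ and the soliton would be trivial by Remark \ref{remark:H0}, contrary to hypothesis. Since $M$ is connected, $\frh$ therefore has constant sign and $\log|\frh|$ is a well-defined smooth function with $\dd\log|\frh|=\dd\frh/\frh=\varphi$; in particular $\varphi$ is exact, and choosing any primitive $\phi$ with $\dd\phi=\varphi$ one computes $\dd(\frh\,e^{-\phi})=(\dd\frh-\frh\,\dd\phi)e^{-\phi}=0$, so that $\frh\,e^{-\phi}=c$ is a non-zero constant, i.e. $\frh=c\,e^{\phi}$. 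Finally, the last assertion is immediate from $\dd\frh=\frh\,\varphi$ with $\frh$ nowhere zero: $\frh$ is constant iff $\dd\frh=0$ iff $\varphi=0$. The main obstacle is the first paragraph, namely correctly accounting for the skew-symmetric contributions of each term — in particular seeing that the closedness of $\varphi$ is exactly what makes $\nabla^g\varphi$ symmetric, so that $\nabla^{g,H}\varphi$ contributes only through its torsion; the subsequent integration is then routine.
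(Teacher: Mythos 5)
Your proof is correct and follows essentially the same route as the paper's: it extracts the skew-symmetric (Maxwell) part of the Einstein equation to obtain $\dd\frh=\frh\,\varphi$, rules out zeros of $\frh$ by the ODE uniqueness argument along curves combined with Remark \ref{remark:H0}, and then integrates to get $\varphi=\dd\phi$ and $\frh=c\,e^{\phi}$. The only difference is that you spell out in more detail why each term contributes (or not) to the skew-symmetric part, which the paper leaves implicit.
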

 
\begin{proof}
Let $(g,\varphi,\frh, D)$ be a non-trivial three-dimensional Heterotic soliton. Denoting by $\nu_g$ the Riemannian volume form associated to $g$, the skew-symmetric part of the first equation in \eqref{eq:motionsugratorsion} is equivalent to:   
\begin{equation*}
0 = \delta^{g} H + H(\varphi) = - \ast_g \dd \ast_g (\frh\, \nu_g) + \frh\, \nu_g (\varphi) = - \ast_g \dd \frh + \frh \ast_g \varphi\, ,
\end{equation*}

\noindent
where we have used Equation \eqref{eq:Ricgh} and $\delta^g$ denotes the codifferential with respect to $g$. Hence, the Maxwell equation of the three-dimensional Heterotic system reduces to:
\begin{equation}
\label{eq:ReducedMaxwell}
\dd \frh = \frh \,\varphi\, . 
\end{equation}

\noindent
By Remark \ref{remark:H0}, setting $\frh = 0$ contradicts the non-triviality of $(g,\varphi,\frh,D)$. Instead, we prove that $\frh$ is necessarily nowhere vanishing. To do this, assume that $\frh(m) = 0$ for a given $m\in M$. For every smooth curve:
\begin{equation*}
\gamma \colon \cI \to M \, ,\quad t\mapsto \gamma_t
\end{equation*}

\noindent
where $\cI$ is an interval containing $0$ and $\gamma(0) = m$, the first equation in \eqref{eq:ReducedMaxwell} implies:
\begin{equation*}
\partial_t(\frh\circ \gamma_t) = (\gamma^{\ast}\varphi)(\partial_t  (\frh\circ \gamma_t))
\end{equation*}

\noindent
which is a linear ordinary differential equation for $\frh\circ \gamma\colon \cI\to \mathbb{R}$. Since $\frh(m)=0$, the existence and uniqueness of solutions to the previous ordinary differential equation imply that $\frh\circ \gamma_t = 0$ for all $t\in \cI$. Since this holds for every such $\gamma\colon \cI\to M$ and $M$ is connected, we conclude that $\frh$ vanishes identically, a contradiction. Hence $\frh$ is nowhere vanishing, and Equation \ref{eq:ReducedMaxwell} reduces to $\varphi=\dd \phi$ for a function $\phi \in C^{\infty}(M)$. Consequently $\frh = c e^{\phi}$ for a non-zero constant $c\in\mathbb{R}^{\ast}$.  
\end{proof}

\noindent
By the previous lemma, in three dimensions, the Maxwell equation is completely solved, and consequently, the three-dimensional Heterotic soliton system reduces to:
\begin{eqnarray}
\label{eq:HeteroticSystem3dI}	
& \mathrm{Ric}^{g} +  \nabla^{g}\varphi - \frac{\frh^2}{2} g + \kappa\, \cR^{D} \circ_g\cR^{D} = 0 \, , \qquad \dd^{\ast}_D\cR^{D} + \varphi\lrcorner\cR^{D} = 0\, , \\
& \delta^g\varphi + |\varphi|^2_g  - \frh^2  +  \kappa\, |\cR^{D}|^2_{g} = 0\, ,\qquad \dd\frh=\frh\varphi\, , \label{eq:HeteroticSystem3dII}	
\end{eqnarray}

\noindent
for tuples $(g,\varphi,\frh,D)\in \Conf(M)$. Given $(g,\varphi,\frh,D)\in \Conf(M)$, since $D$ is compatible with $g$, we can always write:
\begin{eqnarray*}
D =\nabla^{g, \mathbb{A}} := \nabla^g + \mathbb{A}\, ,    
\end{eqnarray*}

\noindent
for a unique \emph{contorsion tensor} $\mathbb{A} \in \Gamma(\wedge^1 M \otimes \wedge^2 M)$. We say that a contorsion tensor $\mathbb{A}\in \Gamma(\wedge^1 M \otimes \wedge^2 M)$ on $(M,g)$ is parallel if:
\begin{equation}
\label{eq:parallelicityA}
\nabla^{g,\mathbb{A}} \mathbb{A} = 0\, ,
\end{equation}

\noindent
and we will refer to such Heterotic soliton as having \emph{parallel torsion}. In the following, we will exclusively consider the system of equations \eqref{eq:HeteroticSystem3dI} and \eqref{eq:HeteroticSystem3dII} for tuples $(g,\varphi,\frh, D)$ for which the contorsion tensor of $D$ is parallel with respect to $D$. Given a non-trivial Heterotic soliton $(g,\varphi,\frh, D)$, a direct inspection of the Heterotic soliton system reveals that $(g,\varphi,-\frh, D)$ is also a non-trivial Heterotic soliton. On the other hand, by Lemma \ref{lemma:Maxwellintegrated} we have that $\frh$ is nowhere vanishing. Since $M$ is assumed to be connected, we can assume that $\frh$ is nowhere vanishing and positive, in which case we have $\varphi = \dd\log(\frh)$. Consequently, we will refer to three-dimensional Heterotic solitons simply as triples of the form $(g,\frh, D)$. Furthermore, since $D$ is determined by $g$ and its contorsion tensor $\mathbb{A}$, we will occasionally refer to $(g,\frh, D)$ simply by $(g,\frh, \mathbb{A})$.

%%%%%%%%%%%%%%%%%%%%%%%%%%%%%%%%%%%%%%%%%%%%%%%%%%%%%%%%%%%%%%%%%%%%%%%%%%%%
%%%%%%%%%%%%%%%%%%%%%%%%%%%%%%%%%%%%%%%%%%%%%%%%%%%%%%%%%%%%%%%%%%%%%%%%%%%%

\section{Parallel torsion on compact Riemannian three-manifolds}
\label{sec:contorsion3d}

%%%%%%%%%%%%%%%%%%%%%%%%%%%%%%%%%%%%%%%%%%%%%%%%%%%%%%%%%%%%%%%%%%%%%%%%%%%%
%%%%%%%%%%%%%%%%%%%%%%%%%%%%%%%%%%%%%%%%%%%%%%%%%%%%%%%%%%%%%%%%%%%%%%%%%%%%

As a preliminary step to our study of the three-dimensional Heterotic soliton system, in this section we consider some basic properties of compact Riemannian three-manifolds $(M,g)$ equipped with a parallel contorsion tensor. Given a contorsion tensor $\mathbb{A} \in \Gamma(\wedge^1 M \otimes \wedge^2 M)$ on $(M,g)$, there exists a unique tensor $A \in \Gamma(T^{\ast} M \otimes T^{\ast}M)$ such that:
\begin{equation*}
\mathbb{A}_X = \ast_g (A(X))\, , \qquad \forall \ X\in TM\, . 
\end{equation*}

\noindent
We decompose $A$ into its orthogonal irreducible components:
\begin{equation*}
A = \tfrac13{\mathrm{Tr}_g(A)}  g + \Theta + \ast_g \zeta\, ,     
\end{equation*}
where $\Theta$ is a traceless symmetric tensor and $\zeta \in \Omega^1(M)$ is a 1-form. From the general formulas valid in dimension 3:
\begin{equation*}
(\ast_g \zeta)(X)=\ast_g (\zeta\wedge X),\qquad \ast_g^2=\mathrm{Id}_{\wedge^*(M)}\, ,    
\end{equation*}

\noindent
we thus have:
\begin{equation*}
\mathbb{A}_X = \tfrac13{\mathrm{Tr}_g(A)} \ast_g X + \ast_g\Theta(X) +  \zeta\wedge X\, , \qquad \forall \ X \in TM\, ,
\end{equation*}
which in turn implies for every $X,Y\in\mathfrak{X}(M)$:
\begin{equation*}
\nabla^{g , \mathbb{A}}_X Y = \nabla^g_X Y + \tfrac13{\mathrm{Tr}_g(A)} \ast_g (X \wedge Y) + \ast_g(\Theta(X) \wedge Y) +  \zeta(Y)  X - g(X,Y) \zeta\, . 
\end{equation*} 

\noindent
Since there is a one-to-one correspondence between $A$ and $\mathbb{A}$ on $(M,g)$, in the following we will denote $\nabla^{g,\mathbb{A}}$ simply by $\nabla^{g,A}$. Similarly, we will refer to three-dimensional Heterotic solitons $(g,\frh, \mathbb{A})$ as triples of the form $(g,\frh, A)$, where $A \in \Gamma(T^{\ast}M \otimes T^{\ast} M)$. By the previous discussion, the parallelism condition \eqref{eq:parallelicityA} for $A$ is equivalent to its irreducible components being parallel:
\begin{equation}
\label{eq:Afinaldecomposition}
\nabla^{g, A}\zeta = 0\, , \qquad \dd\mathrm{Tr}_g(A) = 0\, , \qquad \nabla^{g,A} \Theta = 0\, . 
\end{equation}

\noindent
This observation leads us to the following mutually exclusive possibilities.

\begin{lemma}
\label{lemma:cases}
Let $(M,g)$ be a compact Riemannian three-manifold equipped with a parallel contorsion tensor $A\in \Gamma(\wedge^1 M \otimes \wedge^2 M)$. Then, one and only one of the following possibilities occurs:
\begin{enumerate}[leftmargin=*]
    \item $\nabla^{g,A}$ is locally irreducible, in which case the contorsion tensor $A$ is a constant multiple of the metric. Equivalently, $\mathbb{A}$ is a constant multiple of the volume form of $(M,g)$.
    \item $\nabla^{g,A}$ is locally reducible and non-flat, in which case, up to replacing $M$ by a double cover, there exists a unit norm 1-form $\xi\in \Omega^1(M)$ and constants $\alpha,\beta, \gamma \in \mathbb{R}$ such that for every $ X , Y \in \mathfrak{X}(M)$:
    \begin{equation}
    \label{eq:nablareducible}
    \nabla^{g,A}_X Y = \nabla^g_X Y + \alpha \ast_g (X\wedge Y) + \beta\, Y\lrcorner (\xi\wedge X) + \gamma\, \xi(X) \ast_g (\xi\wedge Y)\, . 
    \end{equation}
    \item $\nabla^{g,A}$ is flat.
\end{enumerate}
\end{lemma}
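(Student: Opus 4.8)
The plan is to read off the three possibilities from the restricted holonomy group of the metric connection $\nabla^{g,A}$, using that each irreducible component $\mathrm{Tr}_g(A)$, $\Theta$ and $\zeta$ of $A$ is $\nabla^{g,A}$-parallel by \eqref{eq:Afinaldecomposition}. Since $\nabla^{g,A}$ is compatible with $g$ on the oriented three-manifold $M$, its restricted holonomy group $\mathrm{Hol}^0(\nabla^{g,A})$ is a connected subgroup of $\mathrm{SO}(3)$. As $\mathfrak{so}(3)$ is simple and admits no two-dimensional subalgebra, the only such subgroups up to conjugacy are $\{1\}$, a circle $\mathrm{SO}(2)$, and $\mathrm{SO}(3)$ itself. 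These correspond respectively to $\nabla^{g,A}$ being flat, locally reducible but non-flat, and locally irreducible, and they are visibly mutually exclusive and exhaustive. This yields the trichotomy; it then remains to pin down the form of $A$ in cases (1) and (2), while case (3) requires nothing further.

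In case (1) the holonomy is $\mathrm{SO}(3)$, and I would use that a parallel tensor is invariant under $\mathrm{Hol}^0$. The parallel $1$-form $\zeta$ is then a holonomy-invariant covector, and the standard representation of $\mathrm{SO}(3)$ has no nonzero invariant vector, so $\zeta=0$. The parallel traceless symmetric endomorphism $\Theta$ commutes with the holonomy action; since the standard representation is absolutely irreducible, Schur's lemma forces $\Theta$ to be a scalar, and tracelessness gives $\Theta=0$. As $\mathrm{Tr}_g(A)$ is constant, this leaves $A=\tfrac13\mathrm{Tr}_g(A)\,g=c\,g$ with $c$ constant; correspondingly $\mathbb{A}_X=c\ast_g X=c\,(X\lrcorner\nu_g)$, so that $\mathbb{A}$ is the constant multiple $c\,\nu_g$ of the volume form.

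In case (2) the holonomy is $\mathrm{SO}(2)$, which preserves an orthogonal splitting $TM=\ell\oplus\ell^\perp$ into its invariant axis line $\ell$ and the $2$-plane $\ell^\perp$ on which it rotates. After replacing $M$ by a double cover if necessary, so as to orient the line field $\ell$, the latter is trivialized by a global unit $1$-form $\xi$; since $\ell$ is parallel and $|\xi|$ is constant, $\nabla^{g,A}\xi$ lies in $\ell\cap\ell^\perp=0$, so $\xi$ is $\nabla^{g,A}$-parallel. Invariance under $\mathrm{SO}(2)$ then forces the parallel $1$-form $\zeta$ to be a constant multiple of $\xi$, say $\zeta=\beta\,\xi$, and the parallel traceless symmetric $\Theta$ to be of the form $\lambda(\xi\otimes\xi-\tfrac13 g)$ with $\lambda$ constant, since $\Theta$ must preserve the splitting and act as a scalar on $\ell^\perp$. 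Substituting these, together with the constant $\mathrm{Tr}_g(A)$, into the general expression for $\nabla^{g,A}_X Y$ recorded before the statement, and using $Y\lrcorner(\xi\wedge X)=\xi(Y)X-g(X,Y)\xi$ and $\ast_g(\Theta(X)\wedge Y)=\lambda\,\xi(X)\ast_g(\xi\wedge Y)-\tfrac{\lambda}{3}\ast_g(X\wedge Y)$, I would match coefficients to obtain exactly \eqref{eq:nablareducible}, with $\alpha=\tfrac13\mathrm{Tr}_g(A)-\tfrac{\lambda}{3}$, $\beta$ as above, and $\gamma=\lambda$, all constant.

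The main obstacle is the holonomy input underlying case (2): the delicate points are justifying the reduction to the three connected subgroups of $\mathrm{SO}(3)$ together with the dictionary with \emph{locally irreducible}, \emph{locally reducible non-flat}, and \emph{flat}, and the global step of passing to a double cover to promote the a priori unoriented invariant line field to a single-valued parallel unit $1$-form $\xi$. Once $\xi$ is in hand, identifying $\zeta$ and $\Theta$ is pure invariant theory for $\mathrm{SO}(2)$ and Schur's lemma for $\mathrm{SO}(3)$, and the passage to \eqref{eq:nablareducible} is the routine algebraic substitution sketched above.
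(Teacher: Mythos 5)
Your argument is correct and is essentially the paper's proof: both rest on the decomposition \eqref{eq:Afinaldecomposition} into $\nabla^{g,A}$-parallel components and then determine $\mathrm{Tr}_g(A)$, $\Theta$, $\zeta$ according to whether the connection is locally irreducible, reducible non-flat (with the parallel unit $\xi$ obtained on a double cover), or flat. Your holonomy-group phrasing (Schur's lemma for $\mathrm{SO}(3)$, $\mathrm{SO}(2)$-invariant theory) is just a repackaging of the paper's direct arguments that a parallel eigenline of $\Theta$ would contradict irreducibility and that a second independent parallel $1$-form would force flatness.
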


\begin{proof}
(1) Suppose first that $\nabla^{g, A}$ is locally irreducible. Then, there are no 1-forms parallel with respect to $\nabla^{g,A}$, whence Equation \eqref{eq:Afinaldecomposition} immediately implies $\zeta = 0$. On the other hand, since $\Theta$ is parallel, symmetric, and traceless, it follows that it either vanishes identically, or has at least one simple eigenvalue, say $c_0$, which is constant. In the latter case, the eigenvectors of this simple eigenvalue $c_0$ form a real line bundle preserved by $\nabla^{g, A}$,
%. In particular, there exists a double cover of $M$ on which this line bundle is trivialized by a vector field $Z$, which satisfies:
%\begin{equation*}
%\Theta(\nabla^{g,A}_X Z) = c_0 \,\nabla^{g,A}_X Z 
%\end{equation*}
%for every $X\in \mathfrak{X}(M)$. Hence, either $\nabla^{g,A} Z = 0$, 
contradicting the fact that $\nabla^{g,A}$ is locally irreducible. Thus $\Theta = 0$ on $M$. \medskip

\noindent
(2) Suppose now that $\nabla^{g, A}$ is locally reducible and not flat. Then, up to passing to a double cover if necessary, $M$ admits a unit norm 1-form $\xi$ satisfying  $\nabla^{g ,A}\xi = 0$. This 1-form is unique modulo sign, since otherwise $\nabla^{g ,A}$ would be flat, a contradiction. Hence, pulling back to this double covering we have $\zeta = \beta\, \xi $ for a real constant $\beta \in \mathbb{R}$, as well as:
\begin{equation*}
\Theta = \gamma\, (\xi \otimes \xi - \tfrac{1}{3} g) 
\end{equation*}

\noindent
for a real constant $\gamma \in \mathbb{R}$. Therefore, we can write:
\begin{equation*}
A = \alpha^{\prime} g + \beta \ast_g \xi + \gamma (\xi \otimes \xi - \tfrac{1}{3} g) = \alpha\, g + \beta \ast_g \xi + \gamma\, \xi\otimes \xi\, ,
\end{equation*}

\noindent
where we have set $\alpha :=  \alpha^{\prime} -\tfrac13\gamma$.\medskip

\noindent
%Finally, a connection is not locally irreducible and is \emph{not} non-flat if and only if it is flat, which gives the mutually exclusive third case.
(3) Finally, if neither (1) nor (2) hold, then $\nabla^{g,A}$ is flat.
\end{proof}

\begin{remark}
In the following, and for simplicity in the exposition, we will implicitly work on $M$ or any of its double covers at our convenience.
\end{remark}

\noindent
Note that case $(2)$ in the previous lemma reduces the parallelism condition of the torsion simply to $\nabla^{g, A}\xi = 0$. On the other hand, regarding the classification of non-trivial Heterotic solitons on a compact manifold, case $(3)$ in the previous lemma cannot occur for their auxiliary connection, as shown in the next result:

\begin{proposition}
A Heterotic soliton with flat auxiliary connection on a compact and oriented three-manifold is trivial.  
\end{proposition}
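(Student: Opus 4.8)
The plan is to show that once $\cR^D=0$ the entire higher-curvature content of the system drops out, and the two surviving equations already force the soliton to be trivial. First I would set $\cR^D=0$ in the reduced system \eqref{eq:HeteroticSystem3dI}--\eqref{eq:HeteroticSystem3dII}: the Yang-Mills equation $\dd^\ast_D\cR^D+\varphi\lrcorner\cR^D=0$ becomes vacuous, the quadratic term $\cR^D\circ_g\cR^D$ disappears from the Einstein equation, and the dilaton equation collapses to the single scalar identity $\delta^g\varphi+|\varphi|^2_g-\frh^2=0$. The key observation is that this reduced dilaton equation, together with the already-integrated Maxwell equation $\dd\frh=\frh\varphi$, is by itself strong enough to conclude; neither the Einstein equation nor the parallel-torsion hypothesis will be needed.

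Next I would argue by contradiction. Suppose the soliton is non-trivial. By Lemma \ref{lemma:Maxwellintegrated} the function $\frh$ is then nowhere vanishing, so after possibly changing its sign we may take $\frh>0$ and write $\varphi=\dd\log\frh$. Introducing $u:=\log\frh\in C^\infty(M)$, so that $\varphi=\dd u$ and $\frh=e^u$, the reduced dilaton equation becomes the scalar partial differential equation
\[
\Delta^g u = e^{2u} - |\dd u|^2_g,\qquad \Delta^g:=\delta^g\dd,
\]
where $\Delta^g$ is the nonnegative Laplace operator on functions.

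Finally I would apply the maximum principle. Since $M$ is compact, $u$ attains its minimum at some point $q\in M$, where $\dd u(q)=0$ and $\Delta^g u(q)\le 0$. Evaluating the displayed equation at $q$ gives $\Delta^g u(q)=e^{2u(q)}>0$, a contradiction. Hence no non-trivial soliton with $\cR^D=0$ exists, so the soliton is trivial.

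I expect the only genuine subtlety to be the bookkeeping of sign conventions — in particular, checking that $\delta^g\dd$ is the nonnegative Laplacian, so that the maximum principle must be invoked at the \emph{minimum} of $u$ (at the maximum the same computation is perfectly consistent and yields nothing). Once the system is seen to collapse to a single elliptic scalar equation, the argument is immediate; the conceptual step is recognizing that flatness of $D$ eliminates every term except those already controlled by Lemma \ref{lemma:Maxwellintegrated}.
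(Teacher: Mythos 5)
Your argument is correct, and it follows the paper's reduction exactly up to the last step: flatness of $D$ kills the curvature terms, Lemma \ref{lemma:Maxwellintegrated} gives $\varphi=\dd\phi$ and $\frh=ce^{\phi}$ with $c\neq 0$, and the dilaton equation collapses to a single scalar elliptic equation. Where you diverge is only in how you extract the contradiction: the paper rewrites the equation $\Delta_g\phi+|\dd\phi|^2_g=c^2e^{2\phi}$ in the divergence form $\Delta_g e^{-\phi}+c^2e^{\phi}=0$ and integrates over $M$, so that $\int_M \Delta_g e^{-\phi}=0$ forces $c=0$; you instead evaluate $\Delta^g u=e^{2u}-|\dd u|^2_g$ at a minimum of $u=\log\frh$. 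Both mechanisms are one line and equally elementary. The integration trick has the mild advantage of not requiring any discussion of the sign convention for $\delta^g\dd$ at extrema (the point you flag as the "only genuine subtlety"), while your pointwise argument avoids the need to guess the integrating factor $e^{-\phi}$. Your sign bookkeeping is right: with $\Delta^g=\delta^g\dd$ nonnegative, one has $\Delta^g u\le 0$ at a minimum, so the contradiction indeed comes from the minimum and not the maximum. One cosmetic remark: you do not actually need to normalize $\frh>0$, since only $\frh^2=c^2e^{2\phi}$ enters the dilaton equation; nowhere-vanishing (from Lemma \ref{lemma:Maxwellintegrated}) is all that is used.
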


\begin{proof}
If $(g,\varphi,\frh,D)$ has flat auxiliary connection $D$, then the dilaton equation, namely the third equation in \eqref{eq:motionsugratorsion}, reduces to:
\begin{equation*}
\delta^g\varphi + |\varphi|^2_g  =  \frh^2\, . 
\end{equation*}

\noindent
Assume that $(g,\varphi,\frh,D)$ is non-trivial. Then, by Lemma \ref{lemma:Maxwellintegrated} we have $\varphi = \dd\phi$ and $\frh = c e^{\phi}$ for a non-zero constant $c$. Plugging these expressions into the previous equation gives:
\begin{equation*}
 \Delta_g e^{-\phi} + c^2 e^{\phi} = 0\, . 
\end{equation*}

\noindent
Integrating over $M$ gives $c=0$, a contradiction.
\end{proof}

\begin{remark}
\label{remark:FinalCases}
By the previous proposition, to classify Heterotic solitons with parallel torsion, we only need to consider the first two cases in Lemma \ref{lemma:cases} for their auxiliary connection. It is convenient to regroup them and split them as follows:
\begin{enumerate}[leftmargin=*]
    \item[(A)] Heterotic solitons with non-flat and reducible auxiliary connection $D$ whose contorsion tensor is parallel and not purely skew symmetric. This is case $(2)$ of Lemma \ref{lemma:cases} supplemented with the condition that $\beta^2 + \gamma^2 \neq 0$. 

    \item[(B)] Heterotic solitons with purely skew-symmetric parallel torsion, be it irreducible or not. This covers case $(1)$ in Lemma \ref{lemma:cases}, together with the subcase $\beta=\gamma= 0$ of $(2)$ in the same lemma. 
\end{enumerate}
\end{remark}

\noindent
In the ensuing sections, we will consider these two cases separately.

%%%%%%%%%%%%%%%%%%%%%%%%%%%%%%%%%%%%%%%%%%%%%%%%%%%%%%%%%%%%%%%%%%%%%%%%%%%%
%%%%%%%%%%%%%%%%%%%%%%%%%%%%%%%%%%%%%%%%%%%%%%%%%%%%%%%%%%%%%%%%%%%%%%%%%%%%

\section{Generic reducible parallel torsion}
\label{sec:ReducibleA}

%%%%%%%%%%%%%%%%%%%%%%%%%%%%%%%%%%%%%%%%%%%%%%%%%%%%%%%%%%%%%%%%%%%%%%%%%%%%
%%%%%%%%%%%%%%%%%%%%%%%%%%%%%%%%%%%%%%%%%%%%%%%%%%%%%%%%%%%%%%%%%%%%%%%%%%%%

In this section, we consider three-dimensional Heterotic solitons whose auxiliary connection $D$ has a non-flat but reducible parallel contorsion of not purely skew-symmetric type, corresponding to case (A) in Remark \ref{remark:FinalCases}. We will refer to such a connection as having \emph{generic reducible parallel torsion}. We begin with some preliminary results on the general structure of compact three-manifolds equipped with such a metric connection with generic reducible, parallel torsion.

%%%%%%%%%%%%%%%%%%%%%%%%%%%%%%%%%%%%%%%%%%%%%%%%%%%%%%%%%%%%%%%%%%%%%%%%%%%%

\subsection{Riemannian compact three-manifolds with generic reducible parallel torsion}
\label{sec:Reduciblepreliminaries}

%%%%%%%%%%%%%%%%%%%%%%%%%%%%%%%%%%%%%%%%%%%%%%%%%%%%%%%%%%%%%%%%%%%%%%%%%%%%

Let $(M,g)$ be a compact Riemannian three-manifold equipped with a metric connection $\nabla^{g, A}$ with parallel contorsion as given in Equation \eqref{eq:nablareducible} for constants $\alpha , \beta, \gamma\in \mathbb{R}$ satisfying $\beta^2 + \gamma^2 \neq 0$. Setting $Y=\xi$ in \eqref{eq:nablareducible}, we obtain:
\begin{equation*}
\nabla^{g,A}_X \xi = \nabla^g_X\xi + \alpha \ast_g (X\wedge \xi) + \beta\, \xi \lrcorner (\xi\wedge X) = 0\, ,
\end{equation*}
or, equivalently:
\begin{eqnarray}
\label{eq:covariantxi}
\nabla^g\xi = \alpha \ast_g \xi + \beta \, (\xi \otimes \xi - g)\, ,
\end{eqnarray}

\noindent
which is thus equivalent to $\nabla^{g,A}$ having parallel contorsion. Taking the skew-symmetric and trace projections of this equation, we obtain:
\begin{equation*}
\dd \xi = 2 \alpha \ast_g \xi \, , \qquad \delta^g\xi = 2\beta\, . 
\end{equation*}
Since $M$ is compact this gives $\beta= 0$, and consequently the connection $\nabla^{g,A}$ simplifies to:
\begin{equation}
\label{eq:finalnablaA}
\nabla^{g,A}_X Y = \nabla^g_X Y + \alpha \ast_g (X\wedge Y) + \gamma\, \xi(X) \ast_g (\xi\wedge Y)  \, ,
\end{equation}

\noindent
for all vector fields $X , Y \in \mathfrak{X}(M)$. 
Therefore, the contorsion of $\nabla^{g,A}$ is given by:
\begin{equation}
\label{eq:finalA} 
A = \alpha\, g + \gamma\, \xi\otimes \xi\, . 
\end{equation}
Equation \eqref{eq:covariantxi} now reads 
\begin{equation}
\label{eq:finalnablaxi} 
\nabla^{g} \xi =  \alpha \ast_g \xi\, .
\end{equation}

\noindent
Since $\beta=0$ and we are assuming $\beta^2 + \gamma^2 \neq 0$ to guarantee that the connection $D$ does not have purely skew-symmetric torsion, it follows that $\gamma\neq 0$, a condition that we will assume for the remainder of this section. We will now show that \eqref{eq:finalnablaxi} allows one to compute the full curvature tensor of $M$ in terms of the scalar curvature $s_g$.

\begin{lemma}
\label{lemma:Riccixi}
 Let $(M,g)$ be a compact Riemannian three-manifold carrying a unit length vector field $\xi$ satisfying Equation \eqref{eq:finalnablaxi}. Then:
 \begin{equation}\label{rg}
\cR^{g}_{X,Y} =- \alpha^2 (X\wedge Y) + (3\alpha^2-\tfrac12 s_g)\, \langle \ast_g\xi,X\wedge Y\rangle_g  \ast_g\xi\,,\qquad\forall\ X,Y\in TM\, .
\end{equation}
In particular, the Ricci tensor of $g$ is given by: 
 \begin{equation}
 \label{ricg}
 \mathrm{Ric}^g = (\tfrac12s_g - \alpha^2) g + (3 \alpha^2 - \tfrac12s_g) \xi\otimes \xi  \, . 
 \end{equation}
\end{lemma}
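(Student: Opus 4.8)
The plan is to extract the full curvature tensor from the first-order equation \eqref{eq:finalnablaxi} by a Ricci-identity computation, and then to invoke the three-dimensional identity \eqref{eq:appR3d} that expresses the Riemann tensor through the Ricci tensor.

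First I would compute the action of the curvature on $\xi$. Differentiating \eqref{eq:finalnablaxi}, using that $\ast_g$ is parallel together with the pointwise identity $(\ast_g\zeta)(X)=\ast_g(\zeta\wedge X)$ and the double vector-product identity $\ast_g\big(\ast_g(\xi\wedge X)\wedge Y\big)=\langle\xi,Y\rangle X-\langle X,Y\rangle\xi$, one finds $\nabla^g_X\nabla^g_Y\xi=\alpha^2(\langle\xi,Y\rangle X-\langle X,Y\rangle\xi)+\alpha\ast_g(\xi\wedge\nabla^g_XY)$. The decisive point is that, upon forming $\cR^g_{X,Y}\xi=\nabla^g_X\nabla^g_Y\xi-\nabla^g_Y\nabla^g_X\xi-\nabla^g_{[X,Y]}\xi$, the Levi-Civita terms and the symmetric $\langle X,Y\rangle\xi$ contributions cancel, leaving
\[ \cR^g_{X,Y}\xi=\alpha^2\big(\langle\xi,Y\rangle X-\langle\xi,X\rangle Y\big)=-\alpha^2\,(X\wedge Y)(\xi). \]
Verifying this cancellation, with all Hodge-star and cross-product signs tracked so that the result matches the convention $(X\wedge Y)(Z)=\langle X,Z\rangle Y-\langle Y,Z\rangle X$, is the main computational obstacle; everything afterwards is bookkeeping.

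Second, I would contract. Taking the trace of $\cR^g_{\cdot,Y}\xi$ against an orthonormal frame gives $\mathrm{Ric}^g(\xi)=2\alpha^2\xi$, so $\xi$ is a Ricci eigenvector, and moreover every $2$-plane containing $\xi$ has sectional curvature $\alpha^2$. Since in dimension three $s_g=2(K_{12}+K_{13}+K_{23})$ for an adapted orthonormal frame $(e_1,e_2,\xi)$, the two planes through $\xi$ contribute $K_{13}=K_{23}=\alpha^2$, whence the remaining sectional curvature of $\xi^\perp$ equals $K_{12}=\tfrac12 s_g-2\alpha^2$. A short symmetry argument, using the pair symmetry of $\cR^g$ together with the explicit formula for $\cR^g_{\cdot,\cdot}\xi$, shows that $\mathrm{Ric}^g$ is diagonal in this frame with eigenvalue $\tfrac12 s_g-\alpha^2$ on $\xi^\perp$ and $2\alpha^2$ on $\mathbb{R}\xi$; this is exactly \eqref{ricg}.

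Finally, I would obtain the full Riemann tensor \eqref{rg} by substituting \eqref{ricg} into the three-dimensional identity \eqref{eq:appR3d}. Writing $\mathrm{Ric}^g(X)=aX+b\langle\xi,X\rangle\xi$ with $a=\tfrac12 s_g-\alpha^2$ and $b=3\alpha^2-\tfrac12 s_g$, expanding the wedge terms and using $Y\wedge X=-X\wedge Y$ collapses the metric part to $(2\alpha^2-\tfrac12 s_g)X\wedge Y$, while the remaining $\xi$-terms reorganize via the elementary identity (valid for unit $\xi$ in dimension three) $\langle\xi,X\rangle\,Y\wedge\xi+\langle\xi,Y\rangle\,\xi\wedge X=-X\wedge Y+\langle\ast_g\xi,X\wedge Y\rangle_g\,\ast_g\xi$, which I would verify on an adapted basis. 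Collecting terms reproduces \eqref{rg}, the degenerate case $b=0$ being the immediate constant-curvature situation. Alternatively one could read \eqref{rg} off directly from the eigenstructure of the curvature operator through the isomorphism $\wedge^2 TM\cong TM$, but routing through \eqref{eq:appR3d} keeps the sign conventions under tightest control.
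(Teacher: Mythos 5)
Your argument is correct and follows essentially the same route as the paper: both start by differentiating \eqref{eq:finalnablaxi} to get $\cR^g_{X,Y}\xi=-\alpha^2 (X\wedge Y)(\xi)$ and then exploit the three-dimensional structure and pair symmetry to fill in the rest. The only divergence is the order of the second half --- the paper determines the remaining component $\cR^g_{e_1,e_2}=(2\alpha^2-\tfrac12 s_g)\,e_1\wedge e_2$ directly (orthogonality to $\xi\wedge e_1,\xi\wedge e_2$ plus the trace) and then obtains \eqref{ricg} by tracing \eqref{rg}, whereas you first extract \eqref{ricg} from the sectional curvatures and then recover \eqref{rg} via \eqref{eq:appR3d}; both are valid, and your auxiliary identity $\langle\xi,X\rangle\,Y\wedge\xi+\langle\xi,Y\rangle\,\xi\wedge X=-X\wedge Y+\langle \ast_g\xi,X\wedge Y\rangle_g\,\ast_g\xi$ checks out on an adapted frame.
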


\begin{proof}
Using \eqref{eq:finalnablaxi} we compute for every vector fields $X,Y$ parallel at a point:
\begin{eqnarray*}
\cR^{g}_{X,Y}\xi &=& \alpha(\nabla_X(Y\lrcorner(\ast_g\xi) - \nabla_Y(X\lrcorner(\ast_g\xi))  \\
&=&\alpha^2(Y\lrcorner(\xi\wedge X)-Y\lrcorner(\xi\wedge X)) = -\alpha^2\xi\lrcorner(X\wedge Y)\, .   
\end{eqnarray*}

\noindent
Using the pair symmetry of $\cR^g$, this gives: 
\begin{equation}
\label{rxi}
\cR^g_{\xi,X}=-\alpha^2\xi\wedge X,\qquad\forall\ X\in TM\, .
\end{equation}

\noindent
Let $e_1,e_2$ be local vector fields on $M$ such that $\xi,e_1,e_2$ is a local orthonormal frame of $TM$. From \eqref{rxi} we obtain that the 2-form $\cR^g_{e_1,e_2}$ is orthogonal to $\xi\wedge e_1$ and $\xi\wedge e_2$, so $\cR^g_{e_1,e_2}=fe_1\wedge e_2$ for some locally defined function $f$. Using this equation together with \eqref{rxi} we compute $s_g=4\alpha^2-2f$, whence $f=2\alpha^2-\tfrac12 s_g$. We thus get: 
\begin{equation}
\label{r12}
\cR^g_{e_1,e_2}=(2\alpha^2-\tfrac12 s_g)e_1\wedge e_2\, ,
\end{equation}

\noindent
which together with \eqref{rxi} implies \eqref{rg}. Finally, \eqref{ricg} follows by taking the trace. 
\end{proof}

\begin{lemma}
\label{lemma:Heisenberg}
Let $(M,g)$ be a non-flat compact Riemannian three-manifold carrying a unit length vector field $\xi$ satisfying Equation \eqref{eq:finalnablaxi} for some non-zero constant $\alpha$. If $s_g=-2 \alpha^2$, then $(M,g)$ is a compact quotient of the simply-connected Heisenberg group equipped with a left-invariant metric. 
\end{lemma}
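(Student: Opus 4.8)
The plan is to recognize the geometric structure encoded by Equation \eqref{eq:finalnablaxi} and then identify it with the Heisenberg model through a fibration argument. First I would observe that $\nabla^g\xi=\alpha\ast_g\xi$ forces $\nabla^g\xi$ to be a skew-symmetric endomorphism of $TM$, so that $\xi$ is a \emph{unit Killing vector field}; moreover $\nabla^g_\xi\xi=\alpha\ast_g(\xi\wedge\xi)=0$, so the integral curves of $\xi$ are geodesics. Together with Lemma \ref{lemma:Riccixi}, which already pins down the full curvature tensor in terms of $s_g$ and $\alpha$, this says that $(M,g,\xi)$ is (up to a constant homothety normalizing $\alpha=1$) a three-dimensional Sasakian manifold, the endomorphism $\phi:=\ast_g(\xi\wedge\cdot)$ together with $\eta:=\xi^\flat$ providing the contact metric data, with $\dd\eta=2\alpha\ast_g\xi$.

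Since $M$ is compact, $\xi$ is complete and its flow is a one-parameter group of isometries. Passing to the universal cover $(\tilde M,\tilde g)$, on which $\xi$ lifts to a complete unit Killing field with geodesic orbits, I would form the local Riemannian submersion $\pi\colon\tilde M\to N$ onto the space of $\xi$-orbits, with one-dimensional totally geodesic fibres tangent to $\xi$ and horizontal distribution $\xi^\perp$. The key computation is O'Neill's formula for the Gauss curvature of the base: evaluating the O'Neill tensor from \eqref{eq:finalnablaxi} gives $|A|^2=\alpha^2$ on unit horizontal vectors, whence $K_N=K_{\mathrm{hor}}+3\alpha^2=(\tfrac12 s_g-2\alpha^2)+3\alpha^2=\tfrac12 s_g+\alpha^2$, where $K_{\mathrm{hor}}=\tfrac12 s_g-2\alpha^2$ is read off from \eqref{ricg}. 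The hypothesis $s_g=-2\alpha^2$ is therefore exactly equivalent to $K_N=0$, i.e. the base $N$ is flat, while the curvature of the connection one-form $\eta$ equals the constant multiple $2\alpha$ of the parallel area form of $N$.

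I would then reconstruct $\tilde M$ from this data: a simply-connected total space of a principal line bundle over the flat plane, carrying a connection of constant non-zero curvature together with the submersion metric, is precisely the simply-connected Heisenberg group with a left-invariant metric, the non-vanishing of $\alpha$ ensuring that the structure is genuinely nilpotent rather than flat (consistent with $M$ being non-flat). Concretely this amounts to producing the Heisenberg Lie algebra as a basis of Killing fields: $\xi$ together with the horizontal lifts $\tilde e_1,\tilde e_2$ of two parallel translations of $N$, whose brackets close as $[\tilde e_1,\tilde e_2]=-2\alpha\,\xi$ and $[\xi,\tilde e_i]=0$. Equivalently, one may bypass the explicit construction by invoking the classification of complete simply-connected three-dimensional Sasakian space forms, the value $-3\alpha^2$ of the $\phi$-sectional curvature singling out the Heisenberg group. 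Finally, compactness of $M$ exhibits it as the quotient of $\tilde M$ by a discrete cocompact group of deck isometries, which yields $(M,g)$ as a compact quotient of the Heisenberg group with its left-invariant metric.

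The step I expect to be the main obstacle is the \emph{globalization}: the curvature identity is purely local, so one must carefully upgrade it to a global isometry of the simply-connected cover with the Heisenberg model. This requires using compactness to secure completeness of $\xi$ and of $\tilde M$, checking that the orbit space is a genuine flat surface after passing to the cover, and confirming that the reconstructed total space is globally homogeneous (equivalently, that the horizontal lifts $\tilde e_i$ extend to global Killing fields). Either route — the explicit left-invariant frame or the appeal to the Sasakian space-form classification — must also correctly separate the Heisenberg case $s_g=-2\alpha^2$ from the neighbouring $SU(2)$ and $\widetilde{\mathrm{SL}}_2(\mathbb{R})$ geometries, which is exactly what the flat-base computation $K_N=\tfrac12 s_g+\alpha^2$ achieves.
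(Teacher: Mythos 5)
Your argument is correct in substance but follows a genuinely different route from the paper. The paper's proof is two lines: it substitutes $s_g=-2\alpha^2$ into \eqref{ricg} to get $\mathrm{Ric}^g=2\alpha^2\,\xi\otimes\xi-2\alpha^2(g-\xi\otimes\xi)$ and then cites \cite[Proposition 4.12]{Moroianu:2023jof}, which already classifies compact three-manifolds carrying a unit Killing field with $\nabla^g\xi=\alpha\ast_g\xi$ and this Ricci signature as quotients of the Heisenberg group. You instead reconstruct the model directly: the O'Neill computation $K_N=K_{\mathrm{hor}}+3\alpha^2=\tfrac12 s_g+\alpha^2$ is accurate (with $K_{\mathrm{hor}}=\tfrac12 s_g-2\alpha^2$ read off from \eqref{r12}), and it is exactly the right invariant to separate the Heisenberg case from the $S^3$ and $\widetilde{\mathrm{SL}}_2(\mathbb{R})$ geometries, so your proof is self-contained where the paper's is a citation. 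What your route buys is transparency about \emph{why} $s_g=-2\alpha^2$ is the Heisenberg value; what it costs is precisely the globalization step you flag yourself: on the universal cover the orbit space of $\xi$ is not automatically a smooth flat surface (a priori the orbits could be circles, as for the Hopf field on $S^3$), so you must either rule this out, or replace the fibration by the observation that \eqref{eq:finalnablaxi} together with the constancy of $s_g$ makes $(M,g)$ curvature-homogeneous in an adapted parallel frame, hence locally homogeneous, and then invoke Singer's theorem (or Cartan--Ambrose--Hicks against the explicit Heisenberg model) to identify $\widetilde M$ globally before quotienting by the deck group. That is the content delegated to \cite{Moroianu:2023jof} in the paper, so your sketch is acceptable provided this step is carried out rather than only announced.
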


\begin{proof}
If $s_g + 2\alpha^2 = 0$, Equation \eqref{ricg} implies that the Ricci tensor of $g$ reads:
 \[
\mathrm{Ric}^g = \begin{bmatrix}
2\alpha^2 & 0 & 0 \\
0 & -2\alpha^2 & 0 \\
0 & 0 & -2\alpha^2
\end{bmatrix}
\]
\noindent
and thus we recover the case considered in \cite[Proposition 4.12]{Moroianu:2023jof}, which proves that $(M,g)$ is a quotient of the Heisenberg group $\mathrm{H}_3$ equipped with a left-invariant metric. 
\end{proof}

\noindent
We now compute the curvature tensor of the connection $\nabla^{g,A}$ in \eqref{eq:finalnablaA}.

\begin{lemma}
\label{lemma:RA}
Let $(M,g)$ be a compact Riemannian three-manifold. Then the curvature tensor of the connection $\nabla^{g,A}$ given in Equation \eqref{eq:finalnablaA} reads:
\begin{equation*}
\cR^{g,A}_{X,Y} = \cR^g_{X,Y} + \alpha^2 (X\wedge Y) + 2\alpha\gamma\, \langle \ast_g\xi,X\wedge Y\rangle_g  \ast_g\xi\, ,
\end{equation*}

\noindent
where $\cR^g$ denotes the Riemann tensor of $g$. In particular $\mathcal{L}_{\xi}\cR^{g,A} = 0$, where $\mathcal{L}$ denotes the Lie derivative symbol.
\end{lemma}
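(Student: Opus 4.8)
The plan is to apply the general formula relating the curvatures of two metric connections that differ by a contorsion tensor. Writing $\nabla^{g,A}=\nabla^g+\mathbb{A}$ with $\mathbb{A}_XY=\alpha\ast_g(X\wedge Y)+\gamma\,\xi(X)\,\ast_g(\xi\wedge Y)$ as in \eqref{eq:finalnablaA}, and using that $\nabla^g$ is torsion-free, one has
\begin{equation*}
\cR^{g,A}_{X,Y}=\cR^g_{X,Y}+(\nabla^g_X\mathbb{A})_Y-(\nabla^g_Y\mathbb{A})_X+[\mathbb{A}_X,\mathbb{A}_Y]\, ,
\end{equation*}
so the whole problem reduces to evaluating the two correction terms. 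To streamline the computation I would introduce the cross product $X\times Y:=\ast_g(X\wedge Y)$, so that $\mathbb{A}_XY=\alpha\,X\times Y+\gamma\,\xi(X)\,\xi\times Y$, and record the two facts that drive everything: the volume form is $\nabla^g$-parallel, hence $\times$ is a parallel operation, and Equation \eqref{eq:finalnablaxi} reads $\nabla^g_Z\xi=\alpha\,\xi\times Z$.

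Next I would compute the two corrections separately. For the commutator I would expand $[\mathbb{A}_X,\mathbb{A}_Y]$ using the triple-product identity $U\times(V\times W)=g(U,W)V-g(U,V)W$; the purely quadratic-in-$\alpha$ part collapses to $\alpha^2(X\wedge Y)$, while there remain several mixed terms proportional to $\alpha\gamma$. For $\nabla^g\mathbb{A}$ only the $\gamma$-part of $\mathbb{A}$ contributes, since the $\alpha$-part is parallel; differentiating $\xi(X)$ and $\xi\times Y$ and substituting $\nabla^g_Z\xi=\alpha\,\xi\times Z$ produces, after antisymmetrizing in the two slots, the term $2\alpha\gamma\,\nu_g(\xi,X,Y)\,\ast_g\xi$ together with a further batch of $\alpha\gamma$-terms. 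The key point — and the main bookkeeping obstacle — is that these leftover $\alpha\gamma$-terms coming from $[\mathbb{A}_X,\mathbb{A}_Y]$ and from $(\nabla^g_X\mathbb{A})_Y-(\nabla^g_Y\mathbb{A})_X$ cancel each other exactly, leaving only the single surviving contribution. Recognizing $\nu_g(\xi,X,Y)=\langle\ast_g\xi,X\wedge Y\rangle_g$ then yields precisely
\begin{equation*}
\cR^{g,A}_{X,Y}=\cR^g_{X,Y}+\alpha^2(X\wedge Y)+2\alpha\gamma\,\langle\ast_g\xi,X\wedge Y\rangle_g\,\ast_g\xi\, .
\end{equation*}

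For the final assertion I would argue that $\xi$ is a Killing field: indeed $g(\nabla^g_X\xi,Y)=\alpha\,\nu_g(\xi,X,Y)$ is skew-symmetric in $X,Y$ by \eqref{eq:finalnablaxi}, so $\mathcal{L}_\xi g=0$. Consequently the local flow of $\xi$ consists of orientation-preserving isometries fixing $\xi$ (since $\mathcal{L}_\xi\xi=0$), hence preserving $g$, $\xi$, and $\nu_g$, and therefore preserving the connection $\nabla^{g,A}$, which by \eqref{eq:finalnablaA} is built solely from these data. A connection-preserving diffeomorphism preserves the curvature, so the flow preserves $\cR^{g,A}$, which is exactly the statement $\mathcal{L}_\xi\cR^{g,A}=0$. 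Equivalently, one may substitute the expression for $\cR^g$ from Lemma \ref{lemma:Riccixi} into the displayed formula, observe that the $X\wedge Y$ terms cancel so that $\cR^{g,A}$ becomes a multiple of $\langle\ast_g\xi,X\wedge Y\rangle_g\,\ast_g\xi$ with coefficient $3\alpha^2+2\alpha\gamma-\tfrac12 s_g$, and conclude using $\mathcal{L}_\xi\ast_g\xi=0$ and $\xi(s_g)=0$, both immediate from $\xi$ being Killing.
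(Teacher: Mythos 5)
Your proof is correct, and the steps you sketch do check out: the $\alpha^2$ part of $[\mathbb{A}_X,\mathbb{A}_Y]$ collapses to $\alpha^2(X\wedge Y)$ by the triple-product identity, and the residual $\alpha\gamma$-terms of the commutator, namely $\alpha\gamma\bigl(\xi(Y)g(X,\cdot)\xi-\xi(X)g(Y,\cdot)\xi+\xi(X)\xi(\cdot)Y-\xi(Y)\xi(\cdot)X\bigr)$, are exactly the negatives of the residual $\alpha\gamma$-terms of $(\nabla^g_X\mathbb{A})_Y-(\nabla^g_Y\mathbb{A})_X$, so the cancellation you rely on is genuine and only the $2\alpha\gamma\,\langle\ast_g\xi,X\wedge Y\rangle_g\,\ast_g\xi$ contribution survives. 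Where you differ from the paper is in how the curvature correction is organized. You use the universal formula $\cR^{g,A}_{X,Y}=\cR^g_{X,Y}+(\nabla^g_X\mathbb{A})_Y-(\nabla^g_Y\mathbb{A})_X+[\mathbb{A}_X,\mathbb{A}_Y]$ for a torsion-free base connection, which forces you to compute $\nabla^g\mathbb{A}$ via \eqref{eq:finalnablaxi} and then observe the cancellation. The paper instead differentiates $\nabla^{g,A}_YZ=\nabla^g_YZ+\ast_g(A(Y)\wedge Z)$ with $\nabla^{g,A}$ itself and exploits $\nabla^{g,A}A=0$, arriving directly at the purely algebraic identity $\cR^{g,A}_{X,Y}=\cR^g_{X,Y}+\ast_gA(\ast_g(A(X)\wedge Y))-\ast_gA(\ast_g(A(Y)\wedge X))-A(X)\wedge A(Y)$, into which $A=\alpha g+\gamma\,\xi\otimes\xi$ is substituted; no derivative of the contorsion ever appears and no cancellation has to be tracked. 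Your version is marginally more elementary in that it only invokes \eqref{eq:finalnablaxi} rather than the full $D$-parallelism of $A$ (the two are equivalent in this setting), at the cost of heavier bookkeeping, while the paper's intermediate formula has the merit of holding for any $\nabla^{g,A}$-parallel contorsion. For $\mathcal{L}_\xi\cR^{g,A}=0$ your argument coincides with the paper's: $\xi$ is Killing and its flow preserves all the data entering \eqref{eq:finalnablaA}, hence the connection and its curvature.
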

 
\begin{proof}
We work with vector fields $X,Y,Z$ parallel with respect to $\nabla^g$ at a point. We compute:
\begin{eqnarray*}
 \nabla^{g,A}_X \nabla^{g,A}_Y Z &=&\nabla^{g,A}_X (\nabla^g_Y Z + \ast_g(A(Y)\wedge Z))\\ &=& \nabla^g_X \nabla^g_Y Z + \ast_g(A(\ast_g(A(X)\wedge Y))\wedge Z)
 + \ast_g(A(Y)\wedge \ast_g (A(X)\wedge Z))\\ &=&  \nabla^g_X \nabla^g_Y Z + Z\lrcorner \ast_g A(\ast_g(A(X)\wedge Y)) - A(Y) \lrcorner (A(X)\wedge Z)\, . 
\end{eqnarray*}

\noindent
Hence, skew-symmetrizing this expression in $X$ and $Y$, we obtain:
\begin{equation*}
\cR^{g,A}_{X,Y} = \cR^g_{X,Y} + \ast_g A(\ast_g(A(X)\wedge Y)) -  \ast_g A(\ast_g(A(Y)\wedge X))  - A(X)\wedge A(Y)\, . 
\end{equation*}

\noindent
Substituting $A = \alpha\, g + \gamma\, \xi\otimes \xi$ into the previous expression gives the desired result. On the other hand, since $\xi$ is a Killing vector field and $\mathcal{L}_{\xi}A=0$,  it follows directly that $\mathcal{L}_{\xi}\cR^{g,A} = 0$.
\end{proof}

\noindent
Using the previous proposition, we now compute the quadratic term $\cR^{g,A}\circ_g\cR^{g,A}$ that appears explicitly in the Einstein equation of the Heterotic soliton system.
\begin{lemma}
 \label{lemma:RARA}
Let $(M,g)$ be a Riemannian three-manifold carrying a unit length vector field $\xi$ satisfying \eqref{eq:finalnablaxi}. If $\nabla^{g,A}$ is the metric connection defined in \eqref{eq:finalnablaA}, then:
\begin{equation*}
\cR^{g,A} \circ_g \cR^{g,A} = (3\alpha^2-\tfrac12s_g+2\alpha\gamma)^2(g-\xi\otimes\xi)\, .    
\end{equation*}

\noindent
In particular, $(\cR^{g,A} \circ_g \cR^{g,A})(\xi) = 0$.
\end{lemma}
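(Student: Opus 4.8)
The plan is to exploit the fact that the two preceding lemmas conspire to collapse $\cR^{g,A}$ into a single rank-one term. Substituting the expression for $\cR^g_{X,Y}$ from Lemma \ref{lemma:Riccixi} into the formula for $\cR^{g,A}_{X,Y}$ from Lemma \ref{lemma:RA}, the two $\alpha^2(X\wedge Y)$ contributions cancel exactly, leaving
\begin{equation*}
\cR^{g,A}_{X,Y} = \lambda\, \langle \ast_g\xi, X\wedge Y\rangle_g\, \ast_g\xi\, ,\qquad \lambda := 3\alpha^2 - \tfrac12 s_g + 2\alpha\gamma\, .
\end{equation*}
Equivalently, viewed as a section of $\wedge^2 M\otimes\wedge^2 M$, the curvature is simply $\cR^{g,A} = \lambda\,(\ast_g\xi)\otimes(\ast_g\xi)$: both tensor factors are proportional to the single 2-form $\ast_g\xi$. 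This structural observation is the heart of the argument and renders the quadratic contraction entirely explicit.

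Next I would feed this into the definition \eqref{rdrd} of the symmetric form $\circ_g$. Working in a local orthonormal frame $e_1, e_2, \xi$ adapted to $\xi$, so that $\ast_g\xi = e_1\wedge e_2$, the curvature components become $\cR^{g,A}_{X,e_i}(e_j,e_k) = \lambda\,(\ast_g\xi)(X,e_i)\,(\ast_g\xi)(e_j,e_k)$. The sum in \eqref{rdrd} then factorizes into a first-factor contraction $\sum_i (\ast_g\xi)(X,e_i)(\ast_g\xi)(Y,e_i)$ times the squared norm of the second factor $\tfrac12\sum_{j,k}(\ast_g\xi)(e_j,e_k)^2 = |\ast_g\xi|_g^2 = |\xi|_g^2 = 1$. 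Since $\xi\lrcorner\ast_g\xi = 0$ while $\ast_g\xi$ restricts to the area form of $\xi^{\perp}$, the first-factor contraction is precisely the orthogonal projection onto $\xi^{\perp}$, namely $g - \xi\otimes\xi$. Assembling the pieces yields $\cR^{g,A}\circ_g\cR^{g,A} = \lambda^2(g - \xi\otimes\xi)$, which is the claim; the final assertion $(\cR^{g,A}\circ_g\cR^{g,A})(\xi) = 0$ is then immediate from $(g-\xi\otimes\xi)(\xi,\cdot) = 0$.

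The one point demanding care is the bookkeeping of the normalization of the determinant inner product $\langle\cdot,\cdot\rangle_g$, which by the conventions of Section \ref{sec:formulae} carries the factor $\tfrac1{k!}$ relative to the naive tensor contraction. The explicit factor of $\tfrac12$ in \eqref{rdrd} and the normalization $|\ast_g\xi|_g^2 = 1$ must be tracked consistently so that the numerical coefficient comes out as $\lambda^2$ rather than $2\lambda^2$ or $\tfrac12\lambda^2$; this is routine but is exactly where a stray factor of $2$ could slip in. Everything else is a direct substitution in the adapted frame.
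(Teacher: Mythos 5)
Your proposal is correct and follows exactly the paper's route: it combines Lemma \ref{lemma:RA} with the expression \eqref{rg} for $\cR^g$ to get the rank-one form $\cR^{g,A}_{X,Y}=\lambda\,\langle\ast_g\xi,X\wedge Y\rangle_g\,\ast_g\xi$ (the paper's Equation \eqref{rga}) and then contracts via the definition \eqref{rdrd}. The paper leaves that final contraction as ``immediate''; you have simply written it out, with the normalization handled correctly, so there is nothing to add.
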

\begin{proof}
From Lemma \ref{lemma:RA} together with Equation \eqref{rg} we obtain: 
\begin{equation}
\label{rga}
\cR^{g,A}_{X,Y} = (3\alpha^2-\tfrac12 s_g+2\alpha\gamma)\, \langle \ast_g\xi,X\wedge Y\rangle_g  \ast_g\xi\,,\qquad\forall\ X,Y\in TM\, .
\end{equation}
The desired formula then immediately follows from the definition \eqref{rdrd} of $\cR^{g,A}\circ_g\cR^{g,A}$.
\end{proof}

%%%%%%%%%%%%%%%%%%%%%%%%%%%%%%%%%%%%%%%%%%%%%%%%%%%%%%%%%%%%%%%%%%%%%%%%%%%%

\subsection{Heterotic solitons with generic reducible parallel torsion} 

%%%%%%%%%%%%%%%%%%%%%%%%%%%%%%%%%%%%%%%%%%%%%%%%%%%%%%%%%%%%%%%%%%%%%%%%%%%%

Our first goal is to show that Heterotic solitons $(g, \frh,A)\in \Sol_{\kappa}(M)$ with auxiliary connection \eqref{eq:finalnablaA} necessarily have constant dilaton, that is, have $\varphi = \dd \log(\frh) = 0$. Since $\xi$ is a Killing vector field and by Lemma \ref{lemma:RA} preserves $\cR^{g,A}$, applying $\mathcal{L}_{\xi}$ to the dilaton equation, namely Equation \eqref{eq:HeteroticSystem3dII}, we obtain:
\begin{equation*}
\mathcal{L}_{\xi}(\delta^g\varphi + |\varphi|^2_g  - \frh^2) = 0\, . 
\end{equation*}

\noindent
Combining this equation with the $\mathcal{L}_{\xi}$-derivative of Equation \eqref{eq:dilatontrace} yields:
\begin{equation*}
\mathcal{L}_{\xi}(\delta^g\varphi + \tfrac{3}{2}\frh^2) = 0\, . 
\end{equation*}

\noindent
On the other hand, by Lemma \ref{lemma:Maxwellintegrated}, we have $\varphi = \dd \log(\frh)$, which plugged back into the previous equation gives:
\begin{equation*}
0 = \mathcal{L}_{\xi}(\delta^g\dd\log(\frh) + \tfrac{3}{2}\frh^2) = \Delta_g {\xi(\log(\frh))}  + 3\, \frh\, \xi(\frh)\, ,
\end{equation*}

\noindent
and thus:
\begin{equation*}
{\xi(\log(\frh))}\Delta_g \xi(\log(\frh))  = - 3\, \xi(\frh)^2\, . 
\end{equation*}

\noindent
Hence, integrating over $M$ we obtain $\xi(\frh) = 0$, that is, $\varphi(\xi) = 0$. In particular, $\mathcal{L}_{\xi}\varphi = 0$.

\begin{proposition}
\label{prop:varphi=0}
Let $(g,\frh,A)\in \Sol_{\kappa}(M)$ be a non-trivial compact Heterotic soliton with $A\in \Gamma(T^{\ast}M\otimes T^{\ast}M)$ given by \eqref{eq:finalA} for certain constants $\alpha\in \mathbb{R}$, $\gamma\in \mathbb{R}^{\ast}$. Then, $\alpha\neq 0$ and $\dd\frh = 0$. In particular, $\varphi=0$.
\end{proposition}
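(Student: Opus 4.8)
The plan is to extract the claim by contracting the Einstein equation---the first equation in \eqref{eq:HeteroticSystem3dI}---twice with the parallel unit field $\xi$. The decisive observation is that restriction to the pair $(\xi,\xi)$ annihilates both the Hessian term $\nabla^g\varphi$ and the higher-curvature term $\cR^D\circ_g\cR^D$, so that all $\kappa$-dependence disappears and one is left with a purely algebraic relation between $\mathrm{Ric}^g(\xi,\xi)$ and $\frh^2$. This restriction is legitimate because $\varphi$ is closed, so $\nabla^g\varphi$ is symmetric and every term in that first equation is a symmetric $2$-tensor.

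First I would record the two cancellations. Lemma \ref{lemma:RARA} gives $(\cR^{g,A}\circ_g\cR^{g,A})(\xi)=0$, so the quadratic curvature term contributes nothing on $(\xi,\xi)$. Next, $\nabla^g_\xi\xi=0$: indeed \eqref{eq:finalnablaxi} yields $\nabla^g_\xi\xi=\alpha\,(\xi\lrcorner\ast_g\xi)=0$ (equivalently, $\xi$ is a unit Killing field, hence geodesic). Combining this with the fact---established just before the statement---that $\varphi(\xi)=\xi(\log\frh)$ vanishes identically, I obtain
\[
(\nabla^g\varphi)(\xi,\xi)=\xi(\varphi(\xi))-\varphi(\nabla^g_\xi\xi)=0\, .
\]

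Evaluating the Einstein equation on $(\xi,\xi)$ then collapses it dramatically. Using \eqref{ricg} one finds $\mathrm{Ric}^g(\xi,\xi)=2\alpha^2$, while the Hessian and curvature terms vanish and $\tfrac{\frh^2}{2}\,g(\xi,\xi)=\tfrac{\frh^2}{2}$; hence $2\alpha^2-\tfrac{\frh^2}{2}=0$, i.e. $\frh^2=4\alpha^2$. Since $\frh$ is nowhere vanishing by Lemma \ref{lemma:Maxwellintegrated}, this forces $\alpha\neq0$; and since $\frh^2$ is now constant while $\frh$ keeps a fixed sign on the connected manifold $M$, the function $\frh$ itself is constant, so $\dd\frh=0$ and $\varphi=\dd\log\frh=0$.

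There is no serious obstacle once the preparatory lemmas are in hand: the entire argument rests on the two vanishings in the $\xi$-direction. The only points demanding care are confirming the symmetry of the first equation (so that the $(\xi,\xi)$-restriction makes sense) and invoking connectedness, rather than merely the value of $|\frh|$, to pin down that $\frh$---and not just $\frh^2$---is constant.
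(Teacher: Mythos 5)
Your proof is correct, and it rests on the same pillars as the paper's: evaluate the Einstein equation in the $\xi$-direction and let Lemma \ref{lemma:Riccixi} (giving $\mathrm{Ric}^g(\xi,\xi)=2\alpha^2$) and Lemma \ref{lemma:RARA} (killing the $\kappa$-term) do the work. The one genuine difference is in how you conclude $\dd\frh=0$. The paper contracts the Einstein equation only once with $\xi$, obtaining a vector-valued identity whose $\xi$-parallel part gives $2\alpha^2=\tfrac12\frh^2$ and whose $\xi$-orthogonal part is $\alpha\ast_g(\xi\wedge\varphi)=0$; it then combines $\xi\wedge\dd\frh=0$ with the previously established $\xi(\frh)=0$ to get $\dd\frh=0$. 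You instead discard the orthogonal component entirely and observe that the scalar identity $\frh^2=4\alpha^2$, with $\alpha$ constant and $\frh$ continuous and nowhere vanishing on connected $M$, already forces $\frh$ to be constant. This is a mild but clean simplification: you never need the Hessian term's full structure $\nabla^g_\xi\varphi=\alpha\ast_g(\xi\wedge\varphi)$, only its vanishing on $(\xi,\xi)$, which follows from $\nabla^g_\xi\xi=0$ alone (so even your appeal to $\varphi(\xi)=0$ is dispensable if you write $(\nabla^g\varphi)(\xi,\xi)=g(\nabla^g_\varphi\xi,\xi)$ using closedness of $\varphi$). Both routes are valid; the paper's yields the extra pointwise information $\xi\wedge\dd\frh=0$ along the way, while yours reaches the conclusion with less computation.
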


\begin{proof}
We plug $\xi$ into the Einstein equation, namely into the first equation of \eqref{eq:HeteroticSystem3dI}. By lemmas \ref{lemma:Riccixi} and \ref{lemma:RARA}, all terms in the Einstein equation when evaluated on $\xi$, are necessarily proportional to $\xi$, except for $\nabla^g_{\xi}\varphi$. We compute:
\begin{eqnarray*}
    \nabla^g_{\xi}\varphi = \nabla^g_{\varphi}\xi = \alpha \ast_g (\xi \wedge \varphi)\, ,
\end{eqnarray*}

\noindent
which is clearly orthogonal to $\xi$. If $\alpha = 0$, by Lemma \ref{lemma:Riccixi} we have $\mathrm{Ric}^g(\xi) = 0$, and therefore the evaluation of the Einstein equation on $\xi$ reduces to $\frh=0$, a contradiction. Hence, $\alpha\neq 0$ and $\xi \wedge \dd\frh = 0$. Therefore, by the previous discussion, also $\xi(\frh) = 0$. Consequently, $\dd\frh = 0$, or, equivalently, $\varphi=0$.
\end{proof}

\noindent
By Proposition \ref{prop:varphi=0}, plugging $\varphi = 0$ in Equation \eqref{eq:dilatontrace}, we obtain that the scalar curvature of a non-trivial compact Heterotic soliton with generic reducible parallel torsion must be a negative constant:

\begin{corollary}
\label{cor:scalarh}
Let $(g,\frh,A)\in \Sol_{\kappa}(M)$ be a non-trivial compact Heterotic soliton with parallel contorsion tensor $A\in \Gamma(T^{\ast}M\otimes T^{\ast}M)$ given by \eqref{eq:finalA} for certain constants $\alpha\in \mathbb{R}$, $\gamma\in \mathbb{R}^{\ast}$. Then:
\begin{equation*}
s_g = - \tfrac{1}{2} \frh^2\, . 
\end{equation*}

\noindent
In particular, the scalar curvature is constant.
\end{corollary}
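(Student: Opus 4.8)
The plan is to combine Proposition \ref{prop:varphi=0} with the $\kappa$-independent trace identity \eqref{eq:dilatontrace}, so that the corollary becomes an essentially immediate consequence of what has already been established. First I would invoke Proposition \ref{prop:varphi=0}: under the hypotheses of the corollary (namely $A$ of the form \eqref{eq:finalA} with $\gamma \in \mathbb{R}^{\ast}$, on a non-trivial compact soliton) it yields $\varphi = 0$ and $\dd\frh = 0$. In particular $\frh$ is a nowhere-vanishing constant on the connected manifold $M$, the nowhere-vanishing property coming from Lemma \ref{lemma:Maxwellintegrated}.

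Next I would substitute $\varphi = 0$ into \eqref{eq:dilatontrace}. The first two terms on the right-hand side, $3\delta^g\varphi$ and $2|\varphi|_g^2$, both vanish, leaving the pointwise identity $s_g = -\tfrac12|H|_g^2$. The only genuine computation is to re-express $|H|_g^2$ in terms of $\frh$: since we are in dimension three and have set $\frh := \ast_g H$, and since the Hodge star is an isometry for the determinant inner product $\langle\cdot,\cdot\rangle_g$ of Section \ref{sec:formulae} (with $\ast_g^2 = \mathrm{Id}$), one gets $|H|_g^2 = |\ast_g H|_g^2 = |\frh|_g^2 = \frh^2$, the last step because $\frh$ is a scalar function. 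This produces $s_g = -\tfrac12\frh^2$.

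To conclude, since $\frh$ is constant the right-hand side is constant, so $s_g$ is constant; and as $\frh$ is nowhere vanishing, $s_g$ is in fact a strictly negative constant. I expect no real obstacle here: the entire content is channelled through Proposition \ref{prop:varphi=0}, and the single delicate point is the normalization $|H|_g^2 = \frh^2$, which is completely fixed by the determinant inner-product convention adopted in Section \ref{sec:formulae}. With that convention in hand the corollary follows directly from the preceding proposition and the trace equation.
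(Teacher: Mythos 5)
Your proposal is correct and follows exactly the paper's route: the corollary is derived by plugging $\varphi=0$ from Proposition \ref{prop:varphi=0} into the trace identity \eqref{eq:dilatontrace}, with the identification $|H|_g^2=\frh^2$ coming from the determinant inner-product convention. Nothing further is needed.
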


\begin{lemma}
\label{lemma:NoYM}
Let $(g,\frh,A)\in \Conf(M)$ be a triple with $A\in \Gamma(T^{\ast}M\otimes T^{\ast}M)$ given by \eqref{eq:finalA} for certain constants $\alpha\in \mathbb{R}$, $\gamma\in \mathbb{R}^{\ast}$. If $(g,\frh,A)$ satisfies the first equation in \eqref{eq:HeteroticSystem3dI} as well as Equation \eqref{eq:HeteroticSystem3dII}, then:
\begin{equation*}
\nabla^{g,A}\cR^{g,A} = 0\, ,
\end{equation*}

\noindent
and consequently the Yang-Mills equation, namely the second equation in \eqref{eq:HeteroticSystem3dI}, is automatically satisfied.
\end{lemma}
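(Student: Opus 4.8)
The plan is to deduce, from the assumed equations alone, that $\varphi=0$ and that $s_g$ is constant, then to read off from the explicit curvature formula \eqref{rga} that $\cR^{g,A}$ is $\nabla^{g,A}$-parallel, and finally to obtain the Yang-Mills equation from this parallelism. The first step requires a small but essential observation: although Proposition \ref{prop:varphi=0} and Corollary \ref{cor:scalarh} are phrased for genuine solutions in $\Sol_{\kappa}(M)$, their proofs use only the Einstein equation (the first equation in \eqref{eq:HeteroticSystem3dI}), the dilaton equation, and the Maxwell equation \eqref{eq:ReducedMaxwell}, and never the Yang-Mills equation. These are precisely the hypotheses of the present statement, so I may apply those arguments verbatim to obtain $\alpha\neq 0$, $\dd\frh=0$, $\varphi=0$, and $s_g=-\tfrac12\frh^2$. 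Since $M$ is connected and $\dd\frh=0$, the function $\frh$ is constant, and hence so is $s_g$.

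With $s_g$ constant, the scalar $\lambda:=3\alpha^2-\tfrac12 s_g+2\alpha\gamma$ occurring in \eqref{rga} is a genuine constant, and that formula reads $\cR^{g,A}=\lambda\,(\ast_g\xi)\otimes(\ast_g\xi)$ as a section of $\wedge^2 M\otimes\wedge^2 M$. Now $\nabla^{g,A}$ is a metric connection, so it preserves $g$ and the Riemannian volume form (its connection form is $\mathfrak{so}(3)$-valued, hence trace-free) and therefore commutes with the Hodge operator $\ast_g$. Since $\xi$ is $\nabla^{g,A}$-parallel by the very construction of the contorsion in case (A) (cf. \eqref{eq:covariantxi} with $\beta=0$), this yields $\nabla^{g,A}(\ast_g\xi)=\ast_g(\nabla^{g,A}\xi)=0$. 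Because $\lambda$ is constant and $\ast_g\xi$ is $\nabla^{g,A}$-parallel, the Leibniz rule gives $\nabla^{g,A}\cR^{g,A}=0$ at once.

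It then remains to verify the Yang-Mills equation $\dd^{\ast}_D\cR^D+\varphi\lrcorner\cR^D=0$. The second term vanishes since $\varphi=0$, while $\dd^{\ast}_D\cR^D$ is a contraction of the $\nabla^{g,A}$-covariant derivative of $\cR^D=\cR^{g,A}$ and hence vanishes as soon as $\cR^{g,A}$ is $\nabla^{g,A}$-parallel. I expect the only delicate point to be precisely this last identification: one must express the codifferential $\dd^{\ast}_D$ entirely through the $D$-covariant derivative of $\cR^D$, so that parallelism annihilates it. This is where the torsion of $D$ enters, and it is what makes the statement non-vacuous, since if one differentiated the form indices with the Levi-Civita connection instead, the relevant divergence would involve $\delta^g(\ast_g\xi)=2\alpha\xi\neq 0$. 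Once the bookkeeping of the covariant divergence relative to $D$ is carried out, the vanishing is immediate and the Yang-Mills equation follows.
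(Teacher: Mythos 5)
Your proposal is correct and takes essentially the same route as the paper: the key step in both is the observation that Proposition \ref{prop:varphi=0} and Corollary \ref{cor:scalarh} rely only on the Einstein, dilaton and Maxwell equations, after which $\varphi=0$ and $s_g$ is constant, and the $\nabla^{g,A}$-parallelism of $\cR^{g,A}$ is immediate (the paper reads it off from the parallelism of $\mathrm{Ric}^g$ via Lemma \ref{lemma:Riccixi}, while you use the equivalent explicit formula \eqref{rga} for $\cR^{g,A}$ as a constant multiple of $(\ast_g\xi)\otimes(\ast_g\xi)$). The ``delicate point'' you flag is settled by the paper's convention, visible in the proof of Proposition \ref{prop:skew3d}, under which $\dd_D^{\ast}\cR^{D}$ is the $D$-divergence of $\cR^{D}$ and therefore vanishes for $D$-parallel curvature.
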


\begin{proof}
We first observe that all the results obtained thus far for Heterotic solitons rely only on the Einstein and dilaton equations of the Heterotic soliton system, and not on the Yang-Mills equation being satisfied. Hence, the result follows from $\nabla^{g,A}\xi= 0$, $\dd s_g = 0$, and the fact that Lemma \ref{lemma:Riccixi} implies that:
\begin{equation*}
 \mathrm{Ric}^g = (\tfrac12s_g - \alpha^2) g + (3 \alpha^2 - \tfrac12s_g) \xi\otimes \xi \, ,
\end{equation*}

\noindent
which is clearly parallel with respect to $\nabla^{g,A}$. 
\end{proof}

\begin{theorem}
\label{thm:Sasaki}
Let $M$ be a compact oriented three-manifold. Up to a double cover, $M$ admits a non-trivial Heterotic soliton $(g,\frh,D)$ with generic reducible parallel torsion if and only if $(M,g)$ is the compact quotient of the Heisenberg group equipped with a left-invariant metric.
\end{theorem}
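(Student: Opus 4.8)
The plan is to prove the two implications separately; in each direction most of the structural work is already packaged in the lemmas of \secref{sec:ReducibleA}, and the task is to combine them correctly. For the forward implication I assume $(g,\frh,D)$ is a non-trivial Heterotic soliton with generic reducible parallel torsion, so that by the reductions preceding the statement its auxiliary connection is $\nabla^{g,A}$ with $A=\alpha g+\gamma\,\xi\otimes\xi$, $\gamma\neq 0$, and $\nabla^g\xi=\alpha\ast_g\xi$. Proposition \ref{prop:varphi=0} supplies $\alpha\neq 0$ and $\varphi=0$, and Corollary \ref{cor:scalarh} gives $s_g=-\tfrac12\frh^2$, a negative constant. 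The crux is to extract a \emph{second} scalar relation from the symmetric Einstein equation, which with $\varphi=0$ reads $\mathrm{Ric}^g-\tfrac12\frh^2 g+\kappa\,\cR^{g,A}\circ_g\cR^{g,A}=0$. Substituting Lemma \ref{lemma:Riccixi} and Lemma \ref{lemma:RARA}, both tensors are combinations of $g$ and $\xi\otimes\xi$, so the equation splits into a $g$-component and a $\xi\otimes\xi$-component.

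Writing $\lambda:=3\alpha^2-\tfrac12 s_g+2\alpha\gamma$, the $\xi\otimes\xi$-component gives $\kappa\lambda^2=3\alpha^2-\tfrac12 s_g$, while the $g$-component, after inserting $\frh^2=-2s_g$, gives $\kappa\lambda^2=\alpha^2-\tfrac32 s_g$. Eliminating $\kappa\lambda^2$ between these two identities yields $2\alpha^2=-s_g$, i.e. $s_g=-2\alpha^2$. Since $\alpha\neq 0$ this forces $(M,g)$ to be non-flat, so Lemma \ref{lemma:Heisenberg} applies and identifies $(M,g)$, up to the double cover on which $\xi$ is globally defined, with a compact quotient of the Heisenberg group carrying a left-invariant metric.

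For the converse I start from a compact quotient $(M,g)$ of the Heisenberg group with a left-invariant metric. Diagonalizing the inner product on the Lie algebra so that the unit central generator $\xi$ is orthogonal to $[e_1,e_2]=c\,\xi$, a Koszul-formula computation shows that $\xi$ is a unit Killing field satisfying $\nabla^g\xi=\alpha\ast_g\xi$ with $s_g=-2\alpha^2$ for the associated constant $\alpha\neq 0$; thus $(M,g)$ fits the framework of \secref{sec:ReducibleA}. I then set $\varphi=0$, $\frh:=2|\alpha|$ (so that $s_g=-\tfrac12\frh^2$), and $A:=\alpha g+\gamma\,\xi\otimes\xi$, and choose the free constant $\gamma$ so that $\kappa(2\alpha+\gamma)^2=1$; for any $\kappa>0$ at least one of the roots $\gamma=-2\alpha\pm\kappa^{-1/2}$ is nonzero, keeping us in the generic reducible regime. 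With these choices a direct check, using Lemma \ref{lemma:Riccixi}, Lemma \ref{lemma:RARA} and \eqref{rga}, confirms the symmetric Einstein equation, the dilaton equation (where $\kappa\,\vert \cR^{g,A}\vert_g^2=\kappa\lambda^2=\frh^2$), and the reduced Maxwell equation $\dd\frh=\frh\varphi$; the Yang-Mills equation is then automatic by Lemma \ref{lemma:NoYM}. Since $\frh\neq 0$, the resulting soliton is non-trivial.

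The two component comparisons and the final verification are routine algebra. I expect the main obstacle to lie in the converse: one must confirm that the Heisenberg geometry genuinely produces the structure equation $\nabla^g\xi=\alpha\ast_g\xi$ with the exact curvature normalization $s_g=-2\alpha^2$, and then check that the \emph{single} remaining free parameter $\gamma$ (with $\frh$ forced by $s_g$ and $\kappa$ prescribed) can be tuned to solve all equations simultaneously while staying away from $\gamma=0$.
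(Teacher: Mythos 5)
Your proof is correct and follows essentially the same route as the paper's: the forward direction splits the Einstein equation into its $g$- and $\xi\otimes\xi$-components via Lemmas \ref{lemma:Riccixi} and \ref{lemma:RARA} to extract $s_g=-2\alpha^2$ and then invokes Lemma \ref{lemma:Heisenberg}, while the converse makes the same choices $\frh=\sqrt{-2s_g}$ and $\kappa(2\alpha+\gamma)^2=1$ and uses Lemma \ref{lemma:NoYM} for the Yang--Mills equation. The only differences are cosmetic: you verify the Heisenberg structure equations \eqref{eq:Heisenberg} by a Koszul computation rather than citing \cite[Proposition 4.14]{Moroianu:2021kit}, and you are slightly more careful than the paper in noting that one of the two roots for $\gamma$ is always nonzero, so the constructed soliton genuinely lies in the generic reducible regime.
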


\begin{proof}
Let $(g,\frh,D)$ be a Heterotic soliton with generic reducible parallel torsion. By Proposition \ref{prop:varphi=0}, $\frh$ is constant and consequently $\varphi =0$, from which we readily compute $\mathrm{Ric}^{g,H}=\mathrm{Ric}^{g}-\tfrac12\frh^2\, g$ upon use of Equation \eqref{eq:Ricgh}. Hence, by Lemma \ref{lemma:RARA}, together with Equation \eqref{ricg}, and Corollary \ref{cor:scalarh}, the Einstein equation, that is, the first equation in the system \eqref{eq:motionsugratorsion}, is equivalent to:
\begin{equation*}
0=(\tfrac12s_g - \alpha^2) g + (3 \alpha^2 - \tfrac12s_g) \xi\otimes \xi+s_g\, g+\kappa(3\alpha^2-\tfrac12s_g+2\alpha\gamma)^2(g-\xi\otimes\xi)\, ,
\end{equation*}
which in turn is equivalent to the system:
\begin{equation}\label{sys}
\begin{cases}
0=\tfrac32s_g - \alpha^2+\kappa(3\alpha^2-\tfrac12s_g+2\alpha\gamma)^2\\
0=3 \alpha^2 - \tfrac12s_g-\kappa(3\alpha^2-\tfrac12s_g+2\alpha\gamma)^2
\end{cases}
\end{equation}
As $\alpha\ne 0$ by Proposition \ref{prop:varphi=0}, the system \eqref{sys} is further equivalent to:
\begin{equation}
\label{eq:cases}
    \begin{cases}
        s_g=& - 2\alpha^2\\
         1\ =&\kappa(2\alpha+\gamma)^2
    \end{cases}
\end{equation} 

\noindent
Hence, by Lemma \ref{lemma:Heisenberg} $(M,g)$ is a compact quotient of the Heisenberg group equipped with a left-invariant metric. \medskip

\noindent
For the converse, assume that $(M,g)$ is the compact quotient of the Heisenberg group equipped with a left-invariant metric. Modulo a double cover, denote by $\xi$ the simple eigenvector of the Ricci tensor of $g$, which satisfies \cite[Proposition 4.14]{Moroianu:2021kit}:
\begin{eqnarray}
\label{eq:Heisenberg}
\mathrm{Ric}^g(\xi) = -s_g \xi\, , \qquad \nabla^g\xi = \sqrt{-\tfrac12{s_g}} \ast_g\xi\, .
\end{eqnarray}

\noindent
Define:
\begin{equation*}
\frh:=  \sqrt{-2 s_g}\, , \qquad D_X Y := \nabla^g_X Y + \sqrt{-\tfrac12{s_g}} \ast_g (X\wedge Y)  + (\tfrac{1}{\sqrt{\kappa}} -  \sqrt{-2s_g})\, \xi(X) \ast_g (\xi\wedge Y)\, .
\end{equation*}

\noindent
We claim that $(g,\frh,D)$ is an Heterotic soliton with non-trivial parallel torsion. The fact that the torsion of $D$ is parallel follows from the fact that $s_g$ is constant and $\xi$ is parallel:
\begin{equation*}
D_X \xi = \nabla^g_X \xi + \sqrt{-\tfrac12{s_g}} \ast_g (X\wedge \xi) = 0\, ,
\end{equation*}

\noindent
where we have used the second equation in \eqref{eq:Heisenberg}. Since $s_g\neq 0$, the connection $D$ is a connection with generic reducible non-trivial torsion. Since $\frh$ is constant, the second equation in the Heterotic soliton system \eqref{eq:HeteroticSystem3dII} is automatically satisfied with $\varphi=0$. This, together with Lemma \ref{lemma:NoYM}, implies that $(g,\frh,D)$ satisfies the Yang-Mills equation of the Heterotic soliton system, namely the second equation in \eqref{eq:HeteroticSystem3dI}. The dilaton equation, namely the first equation in \eqref{eq:HeteroticSystem3dII} is satisfied by the choice $\frh=  \sqrt{-2 s_g}$. Finally, the Einstein equation of the system, namely the first equation in \eqref{eq:HeteroticSystem3dI}, is satisfied by construction since we have chosen $\frh$ and $D$ such that both equations in \eqref{eq:cases} are satisfied. 
\end{proof}

\begin{remark}
Note that, in the proof of the previous theorem, we could have chosen the opposite sign roots when isolating for $\alpha$ and $\gamma$, resulting in a different combination of signs in the definition of $D$. 
\end{remark}

%%%%%%%%%%%%%%%%%%%%%%%%%%%%%%%%%%%%%%%%%%%%%%%%%%%%%%%%%%%%%%%%%%%%%%%%%%%%
%%%%%%%%%%%%%%%%%%%%%%%%%%%%%%%%%%%%%%%%%%%%%%%%%%%%%%%%%%%%%%%%%%%%%%%%%%%%

\section{Skew-symmetric parallel torsion}
\label{sec:Skewcontorsion}

%%%%%%%%%%%%%%%%%%%%%%%%%%%%%%%%%%%%%%%%%%%%%%%%%%%%%%%%%%%%%%%%%%%%%%%%%%%%
%%%%%%%%%%%%%%%%%%%%%%%%%%%%%%%%%%%%%%%%%%%%%%%%%%%%%%%%%%%%%%%%%%%%%%%%%%%%

In this section, we consider three-dimensional Heterotic compact solitons with totally skew-symmetric parallel torsion, which we assume to be non-zero. This covers point (B) in Remark \ref{remark:FinalCases} in the case of non-zero torsion. Note that this case was excluded in the previous section due to the condition $\gamma\neq 0$. As explained in the introduction, the remaining case of vanishing torsion is considerably more difficult and is therefore left open. Hence, we consider tuples $(g,\varphi,\frh,D)\in \Conf(M)$ for which the auxiliary connection has totally skew-symmetric non-zero parallel torsion and can be thus written as follows:
\begin{eqnarray*}
D_X = \nabla^{g,\alpha}_X := \nabla^g_X + \alpha \ast_g X\, , \qquad \forall \ X\in TM
\end{eqnarray*}

\noindent
for a constant $\alpha\in \mathbb{R}^*$. Hence, we will denote this class of tuples simply by $(g,\frh,\alpha)\in \Conf(M)$. The curvature of $\nabla^{g,\alpha}$ follows directly as a particular case of Lemma \ref{lemma:RA}:
\begin{equation}
\label{eq:Rf}
 \cR^{g,\alpha}_{X,Y} = \cR^g_{X,Y} + \alpha^2 X\wedge Y\, , \qquad \forall \ X, Y\in TM\, .
\end{equation}

\noindent
From this, together with Equation \eqref{eq:appR3d}, we obtain:
\begin{eqnarray}
\label{eq:RfRf}
 \cR^{g,\alpha} \circ_g \cR^{g,\alpha} = - \mathrm{Ric}^g\circ_g\mathrm{Ric}^g + (s_g - 2\alpha^2) \mathrm{Ric}^g + (\vert\mathrm{Ric}^g\vert^2_g - \tfrac12{s_g} + 2\alpha^4) g\, ,
\end{eqnarray}

\noindent
which we use in the proof of the following proposition.

\begin{proposition}
\label{prop:skew3d}
A triple $(g,\frh,\alpha)\in \Conf(M)$ satisfies the Heterotic soliton system with totally skew-symmetric parallel torsion if and only if:
\begin{align*}
\nonumber  -\kappa\mathrm{Ric}^g \circ_g \mathrm{Ric}^g +(1 +  \kappa s_g - 2\kappa\alpha^2) \mathrm{Ric}^{g}\hskip4cm&\\ + \left (\kappa \vert\mathrm{Ric}^g \vert_g^2-\tfrac12\kappa{s_g^2} - \tfrac1{2}{\frh^2} + 2\kappa\alpha^4\right)  g  +\nabla^{g}\dd\log(\frh)&= 0 \, , \\
(\dd_{\nabla^g}\mathrm{Ric}^g)(X) - 3\alpha \ast_g \mathrm{Ric}^g_0(X) +\cR^g_{\dd\log(\frh),X} + \alpha^2 \dd\log(\frh)\wedge X& = 0  \, ,\\
s_g - 3\, \delta^g \dd\log(\frh) - 2 \vert \dd\log(\frh)\vert^2 +\tfrac{1}{2} \frh^2 &= 0 \, ,
\end{align*}

\noindent
for every $X\in TM$, where $\mathrm{Ric}^g_0 := \mathrm{Ric}^g - \tfrac13 s_g g$ denotes the traceless Ricci tensor.  
\end{proposition}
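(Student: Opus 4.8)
The plan is to verify the four components of the system \eqref{eq:HeteroticSystem3dI}--\eqref{eq:HeteroticSystem3dII} one at a time, each as an equivalence, using that the totally skew-symmetric torsion fixes $D=\nabla^{g,\alpha}$ and that, by Lemma \ref{lemma:Maxwellintegrated}, the Maxwell equation $\dd\frh=\frh\varphi$ is solved precisely by $\varphi=\dd\log(\frh)$, which I substitute throughout (so the fourth equation is automatic). The symmetric part of the Einstein equation is then immediate: inserting the explicit contraction \eqref{eq:RfRf} of $\cR^{g,\alpha}\circ_g\cR^{g,\alpha}$ into the first equation of \eqref{eq:HeteroticSystem3dI} and collecting the coefficients of $\mathrm{Ric}^g\circ_g\mathrm{Ric}^g$, of $\mathrm{Ric}^g$, and of $g$ reproduces the first displayed equation verbatim, the Hessian $\nabla^g\dd\log(\frh)$ arising from $\nabla^g\varphi$.

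For the dilaton equation I would not argue directly but pass to the $\kappa$-independent trace identity \eqref{eq:dilatontrace}. Taking the $g$-trace of the (already established) Einstein equation, using $\mathrm{Tr}_g(\nabla^g\varphi)=-\delta^g\varphi$ and the general relation $\mathrm{Tr}_g(\cR^D\circ_g\cR^D)=2\,|\cR^D|^2_g$, and subtracting twice the dilaton equation cancels the curvature term and yields \eqref{eq:dilatontrace}; since $H=\frh\,\nu_g$ gives $|H|^2_g=\frh^2$ in the determinant inner product, this is exactly the third displayed equation. Because \eqref{eq:dilatontrace} is the combination (trace of Einstein) minus twice (dilaton), the step is reversible once the Einstein equation holds, so the dilaton equation is equivalent to the third displayed equation.

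The substance is the Yang--Mills equation $\dd^{\ast}_D\cR^D+\varphi\lrcorner\cR^D=0$. Its second term is purely algebraic: by \eqref{eq:Rf}, $(\varphi\lrcorner\cR^{g,\alpha})(X)=\cR^{g,\alpha}_{\dd\log(\frh),X}=\cR^g_{\dd\log(\frh),X}+\alpha^2\,\dd\log(\frh)\wedge X$, which are the last two terms of the second displayed equation. It therefore remains to prove $(\dd^{\ast}_D\cR^{g,\alpha})(X)=(\dd_{\nabla^g}\mathrm{Ric}^g)(X)-3\alpha\ast_g\mathrm{Ric}^g_0(X)$. I would split $\cR^{g,\alpha}=\cR^g+\alpha^2\,\Xi$, where $\Xi_{X,Y}=X\wedge Y$ is the $\nabla^g$-parallel tautological curvature, and write $\dd^{\ast}_D=\dd^{\ast}_{\nabla^g}+(\text{contorsion correction})$, the correction entering only through the $D$-induced derivation $\alpha\,\mathrm{ad}_{\ast_g(\cdot)}$ on the coefficient factor $\wedge^2 M$ (the form slots use the torsion-free $\nabla^g$). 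The Levi-Civita piece $\dd^{\ast}_{\nabla^g}\cR^g$ is handled by the contracted second Bianchi identity, producing $(\dd_{\nabla^g}\mathrm{Ric}^g)(X)$, while $\dd^{\ast}_{\nabla^g}\Xi=0$ by parallelism. The contorsion correction is then evaluated by feeding the three-dimensional identity \eqref{eq:appR3d} into $-\alpha\sum_i\mathrm{ad}_{\ast_g e_i}(\cR^{g,\alpha}_{e_i,X})$ and reducing the resulting cross-product contractions.

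The main obstacle is precisely this last computation: reducing $\dd^{\ast}_D\cR^{g,\alpha}$ to $\mathrm{Ric}^g$, $s_g$ and $\alpha$ demands careful bookkeeping of every torsion term and every power of $\alpha$, and crucially of the numerical factors introduced by the determinant inner product together with the attendant normalizations of $\ast_g$ and of $\dd^{\ast}_D$. One must see the trace part of $\mathrm{Ric}^g$, the $s_g$-proportional terms, and the purely metric $\alpha^2\Xi$-contribution collapse so that only $-3\alpha\ast_g\mathrm{Ric}^g_0(X)$ survives with the correct sign and coefficient $3$; everything else is direct substitution. Collecting the three equivalences, together with $\varphi=\dd\log(\frh)$, then shows that $(g,\frh,\alpha)$ satisfies \eqref{eq:HeteroticSystem3dI}--\eqref{eq:HeteroticSystem3dII} if and only if the three displayed equations hold.
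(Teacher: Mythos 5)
Your overall architecture is the same as the paper's: the first displayed equation is the Einstein equation after inserting \eqref{eq:Ricgh} and \eqref{eq:RfRf} and setting $\varphi=\dd\log(\frh)$; the third is \eqref{eq:dilatontrace}, equivalent to the dilaton equation once the Einstein equation holds; the algebraic term $\varphi\lrcorner\cR^{g,\alpha}$ is read off from \eqref{eq:Rf}; and the whole content sits in the identity $(\dd^{\ast}_D\cR^{g,\alpha})(X)=(\dd_{\nabla^g}\mathrm{Ric}^g)(X)-3\alpha\ast_g\mathrm{Ric}^g_0(X)$. That part of your plan is fine and matches the paper.

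The gap is in the one step you yourself single out as the crux, and it is not merely that the computation is deferred: the convention you announce for $\dd^{\ast}_D$ would make it come out wrong. You stipulate that the contorsion correction enters \emph{only} through the derivation on the coefficient factor, the form slots being differentiated with the torsion-free $\nabla^g$. In the paper's computation $\dd^{\ast}_D\cR^{g,\alpha}$ is $-\sum_i(\nabla^{g,\alpha}_{e_i}\cR^{g,\alpha})_{e_i,X}$ with the \emph{full} connection acting on every slot; this is what makes the tautological piece $\alpha^2\,X\wedge Y$ drop out, and it produces, besides the commutator term on the coefficient factor, the form-slot term $\alpha\,\cR^{g}_{e_i,\ast_g(e_i\wedge X)}$. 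Feeding \eqref{eq:appR3d} into the two corrections gives
\begin{equation*}
\alpha\sum_i\bigl[\cR^g_{e_i,X},\ast_g e_i\bigr]=-\alpha\ast_g\mathrm{Ric}^g(X)\,,\qquad
\alpha\sum_i\cR^{g}_{e_i,\ast_g(e_i\wedge X)}=\alpha\ast_g\bigl(s_g X-2\,\mathrm{Ric}^g(X)\bigr)\,,
\end{equation*}
and it is only the \emph{sum} of these that equals $-3\alpha\ast_g\mathrm{Ric}^g_0(X)$: the $s_g$-proportional pieces cancel between the two. With your convention the second term is absent, the tautological piece is no longer parallel (it contributes $2\alpha^3\ast_g X$), and you land on $-\alpha\ast_g\mathrm{Ric}^g(X)+2\alpha^3\ast_g X$, which differs from the stated $-3\alpha\ast_g\mathrm{Ric}^g_0(X)$ already in the Einstein case (where the latter vanishes and the former does not). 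So either adopt the full-connection convention for $\dd^{\ast}_D$ and carry out both corrections, or the second displayed equation of the proposition will not be reproduced.
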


\begin{proof}
 The first equation in the proposition corresponds to the first equation in \eqref{eq:HeteroticSystem3dI}, namely the Einstein equation of the system, after plugging Equation \eqref{eq:Ricgh}, Equation \eqref{eq:RfRf}, and substituting $\varphi = \dd\log(\frh)$. \medskip

 \noindent
 The second equation in the proposition corresponds to the second equation \eqref{eq:HeteroticSystem3dI}, namely the Yang-Mills equation of the system. Using a vector field $X$ and a local orthonormal frame parallel at a given point, we compute using the Bianchi identity together with Equation \eqref{eq:appR3d}:
 \begin{align*}
 (\nabla^{g,\alpha\ast} \cR^{g,\alpha})(X) & = -(\nabla^{g,\alpha}_{e_i} \cR^{g,\alpha})_{e_i,X} = -(\nabla^{g,\alpha}_{e_i} \cR^{g})_{e_i,X} =  -\nabla^{g,\alpha}_{e_i} \cR^{g}_{e_i,X} + \alpha\,\cR^{g}_{e_i,\ast_g(e_i\wedge X)}\\
& =  -\nabla^{g}_{e_i} \cR^{g}_{e_i,X} -\alpha\, (\ast_g e_i) (\cR^g_{e_i,X})+ \alpha\,\cR^{g}_{e_i,\ast_g(e_i\wedge X)}\\
& =  -\nabla^{g}_{e_i} \cR^{g}_{e_i,X} + \alpha\, (\cR^g_{e_i,X}) (\ast_g e_i)  + \alpha\,\cR^{g}(e_i,\ast_g(e_i\wedge X))\\
&  = \dd^{\nabla^g}\mathrm{Ric}^g - 3\alpha\,\ast_g \mathrm{Ric}^g_0(X)\, .  
 \end{align*}

\noindent
From this, the result follows after using Equation \eqref{eq:Rf}. The third equation in the statement is Equation \eqref{eq:dilatontrace}, which is equivalent to the first equation of \eqref{eq:HeteroticSystem3dII}.
\end{proof}

\begin{remark}
\label{remark:f}
By subtracting the third equation in Proposition \ref{prop:skew3d} from the trace of the Einstein equation, namely the first equation in Proposition \ref{prop:skew3d}, we obtain the identity:
\begin{equation*}
2\kappa \vert \mathrm{Ric}^g_0\vert^2_{g} + 2 \vert \dd\log(\frh) \vert^2_g - 2 \frh^2 + \tfrac16{\kappa} (s_g - 6\alpha^2)^2 + 2\delta^g\dd\log(\frh) = 0\, ,
\end{equation*}

\noindent
which will be used below.
\end{remark}

\noindent
Our first task is to prove that $\frh$ is constant. For this, we first need the following lemmata.

\begin{lemma}
\label{lemma:sgnof}
Let $(g,\frh,\frf)\in \Sol_{\kappa}(M)$ be a compact non-trivial Heterotic soliton with completely skew-symmetric parallel torsion. Then, $s_g \neq 6 \alpha^2$ as functions on $M$.
\end{lemma}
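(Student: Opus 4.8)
The plan is to give a direct argument based on the trace identity \eqref{eq:dilatontrace} together with the maximum principle, exploiting the fact that $6\alpha^2$ is \emph{strictly positive} (because $\alpha\in\RR^*$), whereas $s_g$ is forced to become negative somewhere. In particular, I would not need the full skew-symmetric machinery of Proposition \ref{prop:skew3d} beyond what is already encoded in \eqref{eq:dilatontrace}; the point is simply that $s_g$ cannot be the positive constant $6\alpha^2$ if it dips below zero at one point.

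Concretely, I would normalize $\frh>0$ and set $\phi:=\log\frh$, so that by Lemma \ref{lemma:Maxwellintegrated} we have $\varphi=\dd\phi$, and recall that in three dimensions $|H|_g^2=\frh^2$. Since $M$ is compact, $\phi$ attains a global minimum at some point $q\in M$. At $q$ one has $\dd\phi(q)=0$, hence $\varphi(q)=0$, while $\delta^g\dd\phi(q)=\Delta_g\phi(q)\le 0$, because $\delta^g\dd=\Delta_g$ is the nonnegative Laplacian and the Hessian of $\phi$ is positive semidefinite at a minimum. Substituting these facts into \eqref{eq:dilatontrace} evaluated at $q$ yields
\[
s_g(q)=3\,\delta^g\dd\phi(q)+2\,|\dd\phi(q)|_g^2-\tfrac12\frh(q)^2\le -\tfrac12\frh(q)^2<0,
\]
where the final strict inequality uses that $\frh$ is nowhere vanishing, which is exactly the content of non-triviality via Lemma \ref{lemma:Maxwellintegrated}. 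Since $6\alpha^2>0$, this shows $s_g(q)\neq 6\alpha^2$, so $s_g$ is not the constant function $6\alpha^2$; that is, $s_g\neq 6\alpha^2$ as functions on $M$.

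I do not expect a genuine obstacle inside this lemma: the only delicate points are the sign convention $\delta^g\dd=\Delta_g\ge 0$ (so that the minimum principle gives $\delta^g\dd\phi\le 0$, not the opposite sign) and the strictness $\frh(q)^2>0$, which is precisely where non-triviality is used. The real difficulty, I anticipate, lies not here but in the subsequent step of upgrading this to the statement that $\frh$ is constant, where the Yang–Mills equation of Proposition \ref{prop:skew3d} and the Bochner-type identity of Remark \ref{remark:f}—in particular the term $(s_g-6\alpha^2)^2$ whose non-degeneracy this lemma secures—will have to be exploited in earnest.
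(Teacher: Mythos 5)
Your proof is correct, but it takes a genuinely different and more economical route than the paper's. The paper argues by contradiction: assuming $s_g\equiv 6\alpha^2$, it combines the identity of Remark \ref{remark:f} (which packages the trace of the Einstein equation with the dilaton equation and carries the terms $2\kappa|\mathrm{Ric}^g_0|^2_g$ and $\tfrac{\kappa}{6}(s_g-6\alpha^2)^2$) with four times the third equation of Proposition \ref{prop:skew3d}, and evaluates the resulting scalar identity at an extremum of $\frh$ to force $\alpha=0$. You instead evaluate only \eqref{eq:dilatontrace} at a minimum of $\log\frh$, obtaining $s_g\le-\tfrac12\frh^2<0$ there, and conclude since $6\alpha^2>0$. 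This uses strictly less machinery (no $|\mathrm{Ric}^g_0|^2_g$ term, no contradiction setup), proves exactly the reading of the statement that the paper proves and later needs --- non-equality of $s_g$ and $6\alpha^2$ \emph{as functions}, which suffices because Proposition \ref{prop:constantf} invokes the lemma only after $s_g$ is shown to be constant --- and yields the extra information that $s_g$ is negative at the minimum of $\frh$. Your attention to the sign convention is well placed: the coefficient $3$ in \eqref{eq:dilatontrace} and the integration argument preceding Proposition \ref{prop:varphi=0} force $\delta^g\dd$ to be the nonnegative Laplacian, so $\delta^g\dd\phi\le 0$ holds at a \emph{minimum}, exactly as you use (the paper's own proof writes ``maximum'' where ``minimum'' is meant for the sign to close). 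A final tiny remark: the strictness $\frh(q)^2>0$ is not even needed, since $s_g(q)\le 0<6\alpha^2$ already suffices once $\alpha\neq 0$.
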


\begin{proof}
Suppose $s_g = 6 \alpha^2$ everywhere on $M$. Then, Remark \ref{remark:f} gives:
\begin{equation*}
2\kappa \vert \mathrm{Ric}^g_0\vert^2_{g} + 2 \vert \dd\log(\frh) \vert^2_g - 2 \frh^2  + \delta^g\dd\log(\frh) = 0\, . 
\end{equation*}

\noindent
Adding to this equation the third equation of Proposition \ref{prop:skew3d} multiplied by 4, we obtain:
\begin{equation*}
2\kappa \vert \mathrm{Ric}^g_0\vert^2_{g} +   24 \alpha^2  - 6  \vert \dd\log(\frh)\vert^2   -10 \delta^g\dd\log(\frh) = 0\, . 
\end{equation*}

\noindent
Evaluating this expression at a maximum of $\frh$ we obtain $\alpha = 0$, a contradiction. 
\end{proof}

\begin{lemma}
\label{lemma:Ricci0}
Let $(g,\frh,\frf)\in \Sol_{\kappa}(M)$ be a compact Heterotic soliton with completely skew-symmetric parallel torsion. Then, $\mathrm{Ric}^g_0 (\dd\frh) = 0$ and the following equation is satisfied:
\begin{equation*}
\tfrac{1}{2} \dd s_g = (\tfrac13{s_g} - 2\alpha^2)\,\dd\log(\frh) \, . 
\end{equation*}
\end{lemma}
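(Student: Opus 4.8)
The statement to prove (Lemma \ref{lemma:Ricci0}) asserts two things for a compact Heterotic soliton with completely skew-symmetric parallel torsion: that $\mathrm{Ric}^g_0(\dd\frh) = 0$, and the scalar-gradient identity $\tfrac12\dd s_g = (\tfrac13 s_g - 2\alpha^2)\dd\log(\frh)$.

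\textbf{Plan.} The natural starting point is the Yang-Mills equation, namely the second equation in Proposition \ref{prop:skew3d}, since it is the only one of the three soliton equations that relates the \emph{derivative} of the Ricci tensor to $\alpha$ and to $\dd\log(\frh)$. The plan is to extract scalar information from this tensorial equation by taking a well-chosen trace or contraction. Concretely, I would first recall that in dimension three the second Bianchi identity contracts to the standard formula $(\dd_{\nabla^g}\mathrm{Ric}^g)$ being expressible through $\dd s_g$; in particular, the divergence/exterior-derivative structure of $\mathrm{Ric}^g$ is rigidly tied to $\dd s_g$ via $\delta^g\mathrm{Ric}^g = -\tfrac12 \dd s_g$ (contracted Bianchi). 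So the first step is to write out the three components of the Yang-Mills equation explicitly, using \eqref{eq:appR3d} to replace $\cR^g_{\dd\log(\frh),X}$ by its Ricci expression, so that every term becomes a combination of $\mathrm{Ric}^g$, $s_g$, $\dd\log(\frh)$ and $\alpha$.

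The second step is to decompose the resulting identity into its irreducible pieces. The Yang-Mills equation is a 1-form-valued equation in $X$; the term $\dd_{\nabla^g}\mathrm{Ric}^g$ is a section of $\wedge^1\otimes(\text{sym}^2)$, the term $\ast_g\mathrm{Ric}^g_0(X)$ is (after identifying via the metric) a term picking out the skew part built from the traceless Ricci, and $\cR^g_{\dd\log(\frh),X} + \alpha^2\,\dd\log(\frh)\wedge X$ lands in $\wedge^2 M$. I would separate the equation into its symmetric and skew-symmetric (2-form) parts. The skew/2-form part, once $\cR^g$ is rewritten through \eqref{eq:appR3d}, should produce precisely the relation $\mathrm{Ric}^g_0(\dd\frh) = 0$: the terms $\cR^g_{\dd\log(\frh),X}$ expand into pieces of the form $\dd\log(\frh)\wedge\mathrm{Ric}^g(X)$ and $\mathrm{Ric}^g(\dd\log(\frh))\wedge X$, and after subtracting the $\alpha^2\,\dd\log(\frh)\wedge X$ term and the $3\alpha\ast_g\mathrm{Ric}^g_0(X)$ term, the trace-free Ricci acting on $\dd\log(\frh)$ is forced to vanish. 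Since $\dd\log(\frh) = \frh^{-1}\dd\frh$ and $\frh$ is nowhere vanishing by Lemma \ref{lemma:Maxwellintegrated}, this is equivalent to $\mathrm{Ric}^g_0(\dd\frh)=0$. The symmetric/trace part, combined with the contracted second Bianchi identity $\delta^g\mathrm{Ric}^g = -\tfrac12\dd s_g$, should then collapse to the scalar gradient identity: the $\dd_{\nabla^g}\mathrm{Ric}^g$ term contributes $\dd s_g$ through the Bianchi trace, while the curvature term contributes a $\mathrm{Ric}^g$-weighted multiple of $\dd\log(\frh)$; matching coefficients yields $\tfrac12\dd s_g = (\tfrac13 s_g - 2\alpha^2)\dd\log(\frh)$.

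\textbf{Main obstacle.} The delicate part is the bookkeeping in the second step: correctly expanding $\cR^g_{\dd\log(\frh),X}$ via \eqref{eq:appR3d} and then consistently projecting the 1-form-valued tensor identity onto its symmetric and skew-symmetric components while tracking the Hodge-star $\ast_g$ appearing in front of $\mathrm{Ric}^g_0$. In particular one must use that in dimension three $\ast_g$ identifies a symmetric traceless endomorphism with a $\wedge^1\otimes\wedge^2$ object in a specific way, so that the $-3\alpha\ast_g\mathrm{Ric}^g_0(X)$ term lands in the skew part and genuinely couples to $\mathrm{Ric}^g_0(\dd\log(\frh))$. Once the $\ast_g$-conventions are pinned down and the contracted Bianchi identity is invoked to eliminate $\dd_{\nabla^g}\mathrm{Ric}^g$ in favor of $\dd s_g$, both claimed identities fall out by separating the free tensorial part in $X$ from the part proportional to $\dd\log(\frh)\wedge X$. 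I expect no analytic difficulty here: unlike the preceding lemma, this statement is purely algebraic-differential and requires no integration or maximum principle, only careful index manipulation.
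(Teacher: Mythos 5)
Your starting point (the Yang--Mills equation of Proposition \ref{prop:skew3d}) is the correct one, and the contraction you describe is indeed the first step of the paper's argument: contracting the free argument $X$ with the $2$-form values and using the contracted Bianchi identity yields $\tfrac12\dd s_g=\mathrm{Ric}^g(\dd\log\frh)-2\alpha^2\,\dd\log\frh$. But this is where your plan stalls: that identity still contains the unknown $\mathrm{Ric}^g(\dd\log\frh)$ and only becomes the claimed scalar relation \emph{after} one knows $\mathrm{Ric}^g_0(\dd\frh)=0$. Your claim that $\mathrm{Ric}^g_0(\dd\frh)=0$ falls out of the skew (or any other) irreducible component of the Yang--Mills equation is not correct: apart from its trace, every irreducible component of $(\dd_{\nabla^g}\mathrm{Ric}^g)(X)$ is a genuine first-order (curl-type) differential expression in $\mathrm{Ric}^g_0$ that the contracted Bianchi identity does not eliminate, so the remaining components are differential constraints coupling $\nabla^g\mathrm{Ric}^g_0$ to $3\alpha\,\mathrm{Ric}^g_0$ and to $\mathrm{Ric}^g\otimes\dd\log\frh$; they do not algebraically force $\mathrm{Ric}^g_0(\dd\log\frh)=0$. (You also assign the conclusions to the wrong projections: the $\wedge^1$-contraction is what produces the $\dd s_g$ relation, not the symmetric part.)

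The two missing ideas are: (i) evaluate the Yang--Mills equation at $X=\dd\log(\frh)$, so that $\cR^g_{\dd\log\frh,\,X}$ and $\alpha^2\,\dd\log\frh\wedge X$ vanish identically, leaving $(\dd_{\nabla^g}\mathrm{Ric}^g)(\dd\log\frh)=3\alpha\ast_g\mathrm{Ric}^g_0(\dd\log\frh)$; and (ii) show that the left-hand side vanishes. Point (ii) is the heart of the proof and needs inputs you never invoke: writing $(\dd_{\nabla^g}\mathrm{Ric}^g)(\dd\log\frh)=\dd\bigl(\mathrm{Ric}^g(\dd\log\frh)\bigr)-e_j\wedge\mathrm{Ric}^g(\nabla^g_{e_j}\dd\log\frh)$, the first term vanishes because the contracted identity above exhibits $\mathrm{Ric}^g(\dd\log\frh)$ as a sum of exact forms, and the second vanishes because the \emph{Einstein} equation of Proposition \ref{prop:skew3d} expresses $\nabla^g\dd\log\frh$ as a polynomial in $\mathrm{Ric}^g$ and $g$, hence a symmetric endomorphism commuting with $\mathrm{Ric}^g$, which forces $e_j\wedge\mathrm{Ric}^g(\nabla^g_{e_j}\dd\log\frh)=0$. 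Your proposal treats the lemma as a decomposition exercise internal to the Yang--Mills equation; without the evaluation at $X=\dd\log\frh$ and the input from the Einstein equation the argument cannot close. Once these are supplied, $\alpha\neq0$ gives $\mathrm{Ric}^g_0(\dd\log\frh)=0$, and substituting back into the contracted identity yields $\tfrac12\dd s_g=(\tfrac13 s_g-2\alpha^2)\,\dd\log\frh$.
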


 \begin{proof}
We project the Yang-Mills equation, namely the second equation in Proposition \ref{prop:skew3d}, to $\wedge^1 M$. In other words, for a local orthonormal frame $\{e_i\}$ parallel at the point where the computation is done, we take $X=e_i$, contract with $e_i$ and sum over $i$, obtaining:
\begin{align}\label{eq:projectYM1}
0&= e_i \lrcorner (e_j\wedge (e_i \lrcorner\nabla^g_{e_j} \mathrm{Ric}^g)) +  \mathrm{Ric}^g (\dd\log(\frh)) - 2\alpha^2 \dd\log(\frh) \nonumber \\ 
& =   - \tfrac{1}{2} \dd s_g   + \mathrm{Ric}^g (\dd\log(\frh)) - 2 \alpha^2 \dd\log(\frh)\, ,
\end{align}
where in the last equality we used the contracted Bianchi identity $\nabla^{g\ast}\mathrm{Ric}^g=-\tfrac12\dd s_g$. Evaluating the Yang-Mills equation on $\dd\log(\frh)$, we obtain:
\begin{eqnarray*}
(\dd_{\nabla^g}\mathrm{Ric}^g)(\dd\log(\frh)) = 3\alpha \ast_g(\mathrm{Ric}^g_0(\dd\log(\frh)))\, . 
\end{eqnarray*}
We then compute:
\begin{eqnarray*}
 (\dd_{\nabla^g}\mathrm{Ric}^g)(\dd\log(\frh))& =& e_j \wedge (\dd\log(\frh)\lrcorner\nabla^g_{e_j} \mathrm{Ric}^g) \\
 & = &\dd (\mathrm{Ric}^g(\dd\log(\frh))) - e_j \wedge \mathrm{Ric}^g(\nabla^g_{e_j} \dd\log(\frh)) = 0 \, ,   
\end{eqnarray*} 

\noindent
where we have used Equation \eqref{eq:projectYM1} together with the fact that, by the Einstein equation in Proposition \ref{prop:skew3d}, the term $\mathrm{Ric}^g(\nabla^g_{e_j} \dd\log(\frh))$ is proportional to a symmetric tensor evaluated in $e_j$. Hence, we obtain:
\begin{equation*}
 \mathrm{Ric}^g_0(\dd\log(\frh)) = 0   
\end{equation*}

\noindent
upon use of $\alpha \neq 0$. Plugging this equation back into \eqref{eq:projectYM1}, we obtain the equation in the statement.
\end{proof}

\begin{proposition}
\label{prop:constantf} 
Let $(g,\frh,\alpha)\in \Sol_{\kappa}(M)$ be a compact Heterotic soliton with completely skew-symmetric parallel torsion. Then, $\dd \frh = 0$ and $2s_g = -\frh^2$. In particular, the scalar curvature of $g$ is a strictly negative constant.
\end{proposition}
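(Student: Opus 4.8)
The statement packages two conclusions, $\dd\frh=0$ and $2s_g=-\frh^2$, of which the second follows at once from the first: once $\dd\frh=0$ we also have $\dd\log(\frh)=0$, so $\vert\dd\log(\frh)\vert^2_g=0$ and $\delta^g\dd\log(\frh)=0$, and the dilaton equation (the third equation of Proposition \ref{prop:skew3d}) collapses to $s_g+\tfrac12\frh^2=0$, that is $2s_g=-\frh^2$; since $\frh$ is a nonzero constant by Lemma \ref{lemma:Maxwellintegrated}, this scalar curvature is a strictly negative constant. Thus the whole content of the proposition is the assertion that $\frh$ is constant, and I would spend all the effort there, arguing by contradiction.

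First I would integrate the gradient identity of Lemma \ref{lemma:Ricci0}. Writing $w:=s_g-6\alpha^2$ turns $\tfrac12\dd s_g=(\tfrac13 s_g-2\alpha^2)\,\dd\log(\frh)$ into $\dd w=\tfrac23 w\,\dd\log(\frh)$, hence $\dd\big(\frh^{-2/3}w\big)=0$; since $M$ is connected this yields $s_g=6\alpha^2+C\,\frh^{2/3}$ for a real constant $C$. Thus $s_g$ is an explicit monotone function of $\frh$, and for $C\neq0$ the two have exactly the same critical points. If $\frh$ is non-constant we must have $C\neq0$, since $C=0$ would give $s_g\equiv6\alpha^2$, contradicting Lemma \ref{lemma:sgnof}.

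The core step is to turn the remaining equations into a single scalar relation in $\frh$ to which the maximum principle applies. On the open set $\{\dd\frh\neq0\}$, Lemma \ref{lemma:Ricci0} says that $\dd\log(\frh)$ is a Ricci eigenvector with eigenvalue $\tfrac13 s_g$, i.e. $0$ is an eigenvalue of $\mathrm{Ric}^g_0$; being traceless in dimension three, $\mathrm{Ric}^g_0$ then has eigenvalues $\{0,p,-p\}$ and $\vert\mathrm{Ric}^g_0\vert^2_g=2p^2$. I would feed this eigenstructure into the traceless part of the Einstein equation together with the components of the Yang--Mills equation not already used in Lemma \ref{lemma:Ricci0}, so as to pin down $p$, and hence $\vert\mathrm{Ric}^g_0\vert^2_g$, as an explicit function of $\frh$. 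Substituting this and $s_g=6\alpha^2+C\frh^{2/3}$ into the reduced scalar identity of Remark \ref{remark:f} then gives a closed equation for $\frh$ whose only second-order term is $\delta^g\dd\log(\frh)$, all other terms being explicit functions of $\frh$ plus a nonnegative multiple of $\vert\dd\log(\frh)\vert^2_g$.

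With this in hand I would run the maximum--minimum principle. At a maximum of $\frh$ one has $\dd\log(\frh)=0$ and $\delta^g\dd\log(\frh)\le0$, so the dilaton equation gives $6\alpha^2+C\frh^{2/3}+\tfrac12\frh^2=s_g+\tfrac12\frh^2\le0$ there; since $6\alpha^2>0$---this is exactly where $\alpha\neq0$ is used---this forces $C<0$. At a minimum the reverse inequality $s_g+\tfrac12\frh^2\ge0$ holds. Evaluating the closed scalar equation of the previous paragraph at these two extrema, with the sign of $C$ now fixed and $\vert\mathrm{Ric}^g_0\vert^2_g$ known, should produce incompatible bounds on $\frh_{\min}$ and $\frh_{\max}$, the desired contradiction. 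I expect the main obstacle to be precisely this middle stage: extracting $\vert\mathrm{Ric}^g_0\vert^2_g$ explicitly from the tensorial Einstein and Yang--Mills equations, and then organizing the extremal evaluations (possibly supplemented by an integrated Bochner identity for $\dd\log(\frh)$) so that the max and min constraints genuinely conflict rather than merely cutting out an admissible window for $\frh$.
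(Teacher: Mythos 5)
Your reduction of the second claim to the first is correct, and your integration of Lemma \ref{lemma:Ricci0} into the first integral $s_g=6\alpha^2+C\,\frh^{2/3}$ is a valid (and nice) observation. However, the core of the proposition --- that $\frh$ is constant --- is only sketched as a programme, and the programme has a genuine gap that you yourself flag: you never actually produce the closed scalar equation for $\frh$, never determine $\vert\mathrm{Ric}^g_0\vert^2_g$ as a function of $\frh$, and never verify that the evaluations at the maximum and minimum of $\frh$ are incompatible rather than merely constraining $\frh$ to a window. As written, the argument does not prove the statement. There is also a sign slip: with $\delta^g$ the formal adjoint of $\dd$, one has $\delta^g\dd\log(\frh)\ge 0$ at a maximum of $\frh$, so the dilaton equation gives $s_g+\tfrac12\frh^2\ge 0$ there, not $\le 0$; your deduction that $C<0$ therefore does not go through in the form stated.

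The missing idea, which is the decisive step in the paper's proof, is to take the divergence $\nabla^{g*}$ of the Yang--Mills equation (rewritten using $\mathrm{Ric}^g_0(\dd\frh)=0$ as in Equation \eqref{eq:YMauxiliar}). The first term vanishes by the differential Bianchi identity, the second contributes $-\tfrac12\alpha\ast_g\dd s_g$ via the contracted Bianchi identity, and the third contributes $\tfrac13\,\dd s_g\wedge\dd\log(\frh)$ after using the Einstein equation to kill $\mathrm{Ric}^g_0(e_i)\wedge\nabla^g_{e_i}\dd\log(\frh)$. The resulting identity $\tfrac12\alpha\,\dd s_g=\tfrac13\ast_g(\dd s_g\wedge\dd\log(\frh))$ equates a 1-form proportional to $\dd s_g$ with one pointwise orthogonal to $\dd s_g$, forcing $\dd s_g=0$ since $\alpha\ne 0$; Lemma \ref{lemma:Ricci0} then gives $(s_g-6\alpha^2)\,\dd\log(\frh)=0$, and Lemma \ref{lemma:sgnof} (now that $s_g$ is constant) yields $\dd\frh=0$. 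No maximum principle is needed at this stage. I would recommend replacing your middle and final paragraphs with this divergence computation; your first integral $s_g=6\alpha^2+C\frh^{2/3}$ then becomes redundant, though it is consistent with the conclusion ($\dd s_g=0$ together with $C\ne 0$ would independently force $\dd\frh=0$ --- but you have no independent proof that $\dd s_g=0$ without the divergence argument).
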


\begin{proof}
We first observe that the Yang-Mills equation, namely the second equation in Proposition \ref{prop:skew3d}, can be written as follows:
\begin{equation}
\label{eq:YMauxiliar}
(\dd_{\nabla^g}\mathrm{Ric}^g)(X) - 3\alpha \ast_g \mathrm{Ric}^g_0(X) + \dd\log(\frh) \wedge ( \alpha^2 X - \tfrac16{s_g} X - \mathrm{Ric}^g_0(X)) = 0\, ,
\end{equation}

\noindent
for every $X\in TM$. Here we have used the identity:
\begin{equation*}
    \cR^{g}_{\dd\frh,X} = -\tfrac16{s_g}\dd\frh \wedge X + \mathrm{Ric}^g_0(X)\wedge \dd\frh+X\wedge\mathrm{Ric}^g_0(\dd\frh)\, ,
\end{equation*}

\noindent
in combination with Lemma \ref{lemma:Ricci0}. We proceed by computing the divergence of the Yang-Mills equation, written as in Equation \eqref{eq:YMauxiliar}, by using an orthonormal frame $e_i$ parallel at a point. For the first term, we have:
\begin{equation*}
(\nabla^g_{e_i}\dd_{\nabla^g}\mathrm{Ric}^g)(e_i) = \nabla^g_{e_i}(\dd_{\nabla^g}\mathrm{Ric}^g(e_i)) = e_j \wedge ((\cR^g_{e_i,e_j}\mathrm{Ric}^g)(e_i)) = 0\, . 
\end{equation*}

\noindent
Regarding the second term, we obtain (using again the contacted Bianchi identity):
\begin{eqnarray*}
\nabla^g_{e_i} (\ast_g \mathrm{Ric}^g_0(e_i)) = -\ast_g \nabla^{g\ast}\mathrm{Ric}^g_0 = \tfrac{1}{6}\ast_g \dd s_g\, . 
\end{eqnarray*}

\noindent
For the third term, we compute:
\begin{eqnarray*}
& \nabla^{g}_{e_i} (\dd\log(\frh) \wedge ( \alpha^2 e_i - \tfrac16{s_g} e_i - \mathrm{Ric}^g_0(e_i))) =  \mathrm{Ric}^g_0(e_i) \wedge \nabla^{g}_{e_i} \dd\log(\frh) + \tfrac{1}{6} \dd s_g\wedge \dd \log(\frh)\\
& + \dd\log(\frh) \wedge \nabla^{g\ast}\mathrm{Ric}^g_0 = \tfrac{1}{3} \dd s_g\wedge \dd \log(\frh)\, ,
\end{eqnarray*}

\noindent
where we have used that:
\begin{equation*}
  \mathrm{Ric}^g_0(e_i) \wedge \nabla^{g}_{e_i} \dd\log(\frh)  = 0
\end{equation*}

\noindent
by virtue of isolating the term $\nabla^{g}_{e_i} \dd\log(\frh)$ in the Einstein equation (first equation in Proposition \ref{prop:skew3d}) evaluated on $e_i$. Hence, all in all, we obtain:
\begin{eqnarray*}
  \tfrac{1}{2}\alpha\, \dd s_g  = \tfrac{1}{3} \ast_g (\dd s_g\wedge \dd \log(\frh))\, . 
\end{eqnarray*}

\noindent
Due to $\alpha\neq 0$, this equation is equivalent to $\dd s_g = 0$. By Lemma \ref{lemma:Ricci0}, this implies in turn:
\begin{equation*}
(s_g - 6 \alpha^2)\,\dd\log(\frh)  = 0\, . 
\end{equation*}

\noindent
Since, by Lemma \ref{lemma:sgnof}, we have $s_g \neq 6\alpha^2$ as functions on $M$, and now both $s_g$ and $\alpha$ are constant functions, it follows that $\dd\frh = 0$ and, consequently, the third equation in Proposition \ref{prop:skew3d} reduces to $2s_g = - \frh^2$.
\end{proof}

\noindent
We are now ready to present the final classification results for this section.

\begin{theorem}
\label{thm:skewtorsion}
Let $M$ be an oriented compact three-manifold. Up to a double cover, $M$ admits a Heterotic soliton $(g,\frh,\alpha)$ with non-trivial completely skew-symmetric parallel torsion if and only if $(M,g)$ is either isometric to a compact quotient of the Heisenberg group equipped with a left-invariant metric of scalar curvature $2\kappa  s_g = -1$ or isometric to a compact hyperbolic three-manifold of scalar curvature $\kappa s_g\in (-24 , 0)$.
\end{theorem}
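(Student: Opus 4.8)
\emph{Overall strategy.} I would begin from Proposition~\ref{prop:constantf}, which already places us in the situation where $\frh$ is a nonzero constant, $\varphi=\dd\log(\frh)=0$, and $s_g=-\tfrac12\frh^2$ is a strictly negative constant. With $\varphi=0$ the gradient term $\nabla^{g}\dd\log(\frh)$ disappears from the Einstein equation in Proposition~\ref{prop:skew3d}, so that equation becomes a pointwise \emph{algebraic} identity: a quadratic polynomial relation for $\mathrm{Ric}^g$ with coefficients built from $s_g$, $\alpha$, $\kappa$ and the scalar $|\mathrm{Ric}^g|^2_g$. Diagonalising $\mathrm{Ric}^g$ at a point, each of its three eigenvalues is a root of one and the same quadratic $-\kappa\lambda^2+(1+\kappa s_g-2\kappa\alpha^2)\lambda+C=0$, whose leading and linear coefficients are genuine constants while $C$ depends only on $|\mathrm{Ric}^g|^2_g$. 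Taking the trace of the Einstein equation and using $\frh^2=-2s_g$ expresses $|\mathrm{Ric}^g|^2_g$ through $s_g,\alpha,\kappa$ alone, so $C$ is constant and the quadratic has two constant roots $r_1,r_2$. Since the eigenvalue functions are continuous and confined to the two-point set $\{r_1,r_2\}$, on the connected $M$ the multiplicity pattern is locally, hence globally, constant; this leaves exactly two mutually exclusive regimes: $\mathrm{Ric}^g$ has a single eigenvalue everywhere (so $g$ is Einstein), or it has two distinct eigenvalues everywhere.

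\emph{The Einstein (hyperbolic) regime.} In dimension three Einstein forces constant sectional curvature, and $s_g<0$ identifies $(M,g)$ with a compact hyperbolic manifold. Substituting $\mathrm{Ric}^g=\tfrac13 s_g\,g$ into the Einstein equation and clearing denominators collapses it to the single quadratic $36\kappa\alpha^4-12\kappa s_g\alpha^2+(\kappa s_g^2+24 s_g)=0$ in the unknown $\alpha^2$. Its product of roots equals $s_g(\kappa s_g+24)/(36\kappa)$ and its sum equals $s_g/3<0$, so a strictly positive root $\alpha^2$ (equivalently, a nontrivial skew torsion) exists if and only if $\kappa s_g+24>0$, that is $\kappa s_g\in(-24,0)$. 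Here the Yang--Mills equation holds automatically since $\mathrm{Ric}^g_0=0$ and $s_g$ is constant, and the dilaton equation is satisfied by $\frh=\sqrt{-2s_g}$; this settles both the forward and the converse direction on hyperbolic manifolds in one stroke.

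\emph{The two-eigenvalue (Heisenberg) regime.} The simple eigenvalue singles out, after passing to a double cover to fix an orientation, a globally defined unit vector field $\xi$, with $\mathrm{Ric}^g_0=\delta(\xi\otimes\xi-\tfrac13 g)$ for a nonzero constant $\delta$. The plan is to feed this into the Yang--Mills equation, compute $\dd_{\nabla^g}(\xi\otimes\xi)$ in an adapted orthonormal frame $\{\xi,e_1,e_2\}$, and test against $X$: the choice $X=\xi$ yields $\dd\xi=2\alpha\ast_g\xi$, while $X=e_1,e_2$ pin down all remaining components of $\nabla^g\xi$ and assemble into the rigid identity $\nabla^g\xi=\alpha\ast_g\xi$, which is precisely Equation~\eqref{eq:finalnablaxi}. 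Lemma~\ref{lemma:Riccixi} then gives $\mathrm{Ric}^g=(\tfrac12 s_g-\alpha^2)g+(3\alpha^2-\tfrac12 s_g)\xi\otimes\xi$, so the two eigenvalues are the constants $2\alpha^2$ and $\tfrac12 s_g-\alpha^2$, distinct by Lemma~\ref{lemma:sgnof}. Requiring these to be the two roots of the Einstein quadratic, via its Vieta sum and product relations together with the computed constant value of $|\mathrm{Ric}^g|^2_g$, produces two algebraic equations whose unique solution is $2\kappa s_g=-1$ and $\alpha^2=\tfrac1{4\kappa}$; in particular $s_g=-2\alpha^2$, so Lemma~\ref{lemma:Heisenberg} identifies $(M,g)$ as a compact Heisenberg quotient. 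For the converse, on such a quotient the structure equations~\eqref{eq:Heisenberg} already give $\nabla^g\xi=\alpha\ast_g\xi$ with $\alpha=\tfrac1{2\sqrt\kappa}$; taking $\frh=\sqrt{-2s_g}$ and $D=\nabla^{g,\alpha}$ makes the torsion $D$-parallel, the Yang--Mills equation follows as in Lemma~\ref{lemma:NoYM} (since $\mathrm{Ric}^g$ is $D$-parallel and $s_g$ is constant), and the remaining equations hold by the very choice of $\alpha$ and $\frh$.

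\emph{Main obstacle.} The delicate step is the Yang--Mills computation in the two-eigenvalue regime: extracting the rigid identity $\nabla^g\xi=\alpha\ast_g\xi$ out of $\dd_{\nabla^g}(\xi\otimes\xi)=3\alpha\ast_g\mathrm{Ric}^g_0$ requires a careful frame-by-frame evaluation, using beforehand that $\xi$ is geodesic and divergence-free (both consequences of the contracted Bianchi identity $\nabla^{g\ast}\mathrm{Ric}^g=0$ together with the parallelism of the $g$-part of $\mathrm{Ric}^g$). It is exactly this rigidity that, once combined with the Einstein relations, forces the sharp normalisation $2\kappa s_g=-1$. A secondary point needing care is the continuity argument upgrading ``at most two eigenvalues pointwise'' to ``globally constant multiplicities,'' which guarantees that the Einstein and Heisenberg regimes are genuinely exhaustive and mutually exclusive on the connected manifold $M$.
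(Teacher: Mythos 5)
Your proposal is correct and follows essentially the same route as the paper: reduce to constant $\frh$ and $s_g$ via Proposition~\ref{prop:constantf}, split according to whether the constant-coefficient algebraic Einstein equation forces one or two Ricci eigenvalues, extract $\nabla^g\xi=\alpha\ast_g\xi$ from the Yang--Mills equation in the non-Einstein case, and pin down the constants algebraically. The only cosmetic differences are that you organise the Heisenberg-regime algebra via Vieta's relations for the Einstein quadratic (where the paper uses the Bochner formula and direct substitution, arriving at the same values $2\kappa s_g=-1$, $\alpha^2=\tfrac1{4\kappa}$) and that you invoke Lemma~\ref{lemma:Heisenberg} rather than an external reference to identify the Heisenberg quotient; both variants check out.
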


\begin{proof}
To prove the \emph{only if} condition, consider a compact Heterotic soliton $(g,\frh,\alpha)\in \Sol_{\kappa}(M)$ with completely skew-symmetric non-zero parallel torsion. By Proposition \ref{prop:constantf}, $\frh$ and $\alpha$ are both constant, the scalar curvature is given by $s_g = -\tfrac12\frh^2$, and by Remark \ref{remark:H0}, $\frh$ is non-vanishing. Then, the Einstein equation in Proposition \ref{prop:skew3d} becomes an algebraic equation of second order for the Ricci tensor $\mathrm{Ric}^g$, generalizing the type of equation appearing in \cite[Proposition 4.9]{Moroianu:2023jof}. Hence, either $g$ is Einstein or $\mathrm{Ric}^g$ has two different constant eigenvalues. If $g$ is Einstein, then the Heterotic soliton system in Proposition \ref{prop:skew3d} reduces to:
\begin{eqnarray*}
-  \tfrac19 \kappa \,{s_g^2} + (1 + \kappa s_g - 2\kappa\, \alpha^2  )\tfrac13{s_g}  +  ( \tfrac13 \kappa\,{s_g^2} -\tfrac12{\kappa}\, s_g^2  - \tfrac{1}{2} \frh^2 + 2\kappa\,\alpha^4 )   = 0\, ,
\end{eqnarray*}
which taking into account that $2 s_g = -\frh^2$, simplifies to:
\begin{equation}
\label{eq:constrainthf}
\kappa (\frh^2 + 12 \alpha^2)^2 = 48 \frh^2\, . 
\end{equation}

\noindent
For any solution of this equation, $(M,g)$ is a compact Einstein manifold of strictly negative curvature and therefore a compact hyperbolic three-manifold. Furthermore, using that $\alpha\neq 0$, it follows from the previous algebraic equation that $\kappa\, \frh^2 \in (0,48)$, whence $\kappa\, s_g \in (-24,0)$.\medskip

\noindent
Suppose now that $g$ is not Einstein but instead has two different constant eigenvalues, say $\mu_1 , \mu_2 \in \mathbb{R}$, with $\mu_1$ being the double eigenvalue. Then, denoting by $\xi$ a unit vector field corresponding to the simple eigenvalue (which always exists on $M$ or some double cover of it), we can write:
\begin{equation*}
\mathrm{Ric}^g = \mu_1 g+(\mu_2-\mu_1)\xi\otimes\xi,\qquad \mathrm{Ric}^g_0=(\mu_2-\mu_1)\left(-\tfrac13g+\xi\otimes\xi\right)\, . 
\end{equation*}

\noindent
Using this equation, we first impose the Yang-Mills equation of Proposition \ref{prop:skew3d} and then the Einstein equation. Since $\alpha$ and $\frh$ are constant, the Yang-Mills equation reduces to:
\begin{equation*}
 (\dd^{\nabla^g}\mathrm{Ric}^g)(X) - 3 \alpha \ast_g\mathrm{Ric}^g_0(X)  = 0\, , \qquad \forall\ X\in TM\, . 
\end{equation*}

\noindent
We compute:
\begin{eqnarray*}
\dd^{\nabla^g}\mathrm{Ric}^g=(\mu_2-\mu_1)(\dd\xi\otimes\xi+(e_i\wedge\xi)\otimes\nabla^g_{e_i}\xi)    \, ,
\end{eqnarray*}

\noindent
and thus the Yang-Mills equation becomes:
\begin{eqnarray*}
\xi(X)\,\dd\xi +g(\nabla^g_{e_i}\xi,X)\,e_i\wedge\xi - 3 \alpha \ast_g (-\tfrac13 X + \xi(X)\,\xi ) = 0\,,\qquad\forall\ X\in TM\, .    
\end{eqnarray*}

\noindent
For $X=\xi$ this gives $\dd\xi = 2 \alpha \ast_g \xi$. Plugging this back into the previous equation, we obtain:
\begin{eqnarray*}
g(\nabla^g_{e_i}\xi,X)\,e_i\wedge\xi + \alpha \ast_g (X - \xi(X)\,\xi ) = 0 \, ,\qquad\forall \ X\in TM\, .    
\end{eqnarray*}

\noindent
Taking the interior product of this equation with $\xi$ and using again $\dd\xi=2\alpha \ast_g\xi$ yields:
\begin{eqnarray*}
\alpha \ast_g (\xi \wedge X)&=&g(\nabla^g_\xi\xi,X)\xi-g(\nabla^g_{e_i}\xi,X)\,e_i \\
&=&\dd\xi(\xi,X) \xi-g(\nabla^g_X\xi,e_i)\,e_i-\dd\xi(e_i,X)\,e_i\\
& =&-\nabla^g_X\xi + 2\alpha \ast_g (\xi\wedge X)\, ,    
\end{eqnarray*}

\noindent
implying that $\nabla^g_X\xi= \alpha \ast_g (\xi\wedge X)$ for every $X\in TM$. In particular, $\xi$ is Killing. The Bochner formula gives $\mathrm{Ric}^g(\xi) = 2\alpha^2\xi$, and therefore:
\begin{equation*}
 2\mu_2 = 4\alpha^2=-2s_g=-2(2\mu_1+\mu_2)\, .
\end{equation*}

\noindent
In particular, $\mu_1+\mu_2=0$, and hence we can set $\mu := \mu_1 = - \mu_2$. On the other hand, by Equation \eqref{eq:dilatontrace}, we have $2s_g = -\frh^2$, and therefore, we conclude:
\begin{eqnarray*}
\frh^2 = 4\alpha^2\, .
\end{eqnarray*}

\noindent
Plugging this expression into the Einstein equation of Proposition \ref{prop:skew3d}, together with:
\[
\mathrm{Ric}^g = \begin{bmatrix}
\mu & 0 & 0 \\
0 & - \mu & 0 \\
0 & 0 & - \mu
\end{bmatrix} 
\]

\noindent
we obtain:
\begin{equation}
\label{eq:algebraicHeisenberg}
 -  \kappa \, \mu^2 + (1  - 4\kappa  \, \alpha^2  )\mu +  ( 3\kappa \mu^2   - 2\alpha^2 )    = 0\, ,\qquad \mu = -2\alpha^2\, .
\end{equation}

\noindent
The general solution to this equation is:
\begin{equation*}
4 \kappa \alpha^2 = 1\, , \qquad \mu = \tfrac{1}{2\kappa}\, , 
\end{equation*}

\noindent
which implies:
\begin{equation*}
    \kappa s_g = -\tfrac{1}{2}\, .
\end{equation*}

\noindent
Hence, we precisely recover the case considered in \cite[Theorem 4.15]{Moroianu:2023jof}, which proves that in this case $(M,g)$ is isometric to the compact quotient of the Heisenberg group equipped with a left-invariant metric of scalar curvature $2\kappa s_g = -1$. \medskip

\noindent
To prove the \emph{if} direction, consider first $(M,g)$ to be a compact hyperbolic three-manifold of scalar curvature $\kappa s_g\in (-24 , 0)$. We set:
\begin{equation*}
\frh := \sqrt{-2 s_g}\, , \qquad D_X := \nabla^g_X + \alpha \ast_g X\, ,
\end{equation*}

\noindent
where we choose $\alpha$ such that equation \eqref{eq:constrainthf} is satisfied for the previous choice of $\frh$. The fact that $s_g \in (-24 , 0)$ guarantees that a solution $\alpha\in\mathbb{R}^*$ exists. Then, $(g,\frh,\alpha)$ is a Heterotic soliton. Indeed, the dilaton equation follows directly from the previous choice of $\frh$, since $\varphi = 0$. The Maxwell equation for $D$ follows directly since $\dd_{\nabla^g}\mathrm{Ric}^g = 0$ and $\mathrm{Ric}^g_0 = 0$ as a consequence of $g$ being Einstein. The Einstein equation also follows by reversing the arguments that lead to Equation \eqref{eq:constrainthf}. 

Assume now that $(M,g)$ is the compact quotient of the Heisenberg group equipped with a left-invariant metric $g$ of scalar curvature $2\kappa  s_g = -1$. Modulo a double cover, denote by $\xi$ the simple eigenvector of the Ricci tensor of $g$, which satisfies \cite[Proposition 4.14]{Moroianu:2021kit}:
\begin{eqnarray}
\label{eq:HeisenbergII}
\mathrm{Ric}^g = - \tfrac{1}{2\kappa} g + \tfrac{1}{\kappa}\xi\otimes\xi\, , \qquad \nabla^g\xi = \tfrac{1}{2\sqrt{\kappa}} \ast_g\xi\, .
\end{eqnarray}

\noindent
where we have used the relation $2\kappa  s_g = -1$. We set:
\begin{equation*}
\frh:=  \tfrac{1}{\sqrt{\kappa}}\, , \qquad D_X Y := \nabla^g_X Y + \tfrac{1}{2\sqrt{\kappa}} \ast_g (X\wedge Y)\, , \qquad \forall\ X, Y \in \mathfrak{X}(M) \, .
\end{equation*}

\noindent
We claim that $(g,\frh,D)$ is an Heterotic soliton with non-trivial parallel skew-symmetric torsion. The fact that the torsion of $D$ is parallel follows from the fact that $\kappa > 0$ is constant.  Hence, $\frh$ is also constant, and since $2\kappa  s_g = -1$, it follows from the definition of $\frh$ that $2s_g = -\frh^2$ and consequently the third equation in Proposition \ref{prop:skew3d} is satisfied. Thanks again to $\frh$ being constant, the second equation in Proposition \ref{prop:skew3d} reduces to:
\begin{equation*}
(\dd^{\nabla^g}\mathrm{Ric}^g)(X)  - \tfrac{3}{2\sqrt{\kappa}} \ast_g (\mathrm{Ric}^g_0(X)) = 0\, , \qquad \forall \ X\in TM  \, ,  
\end{equation*}

\noindent
and is satisfied via a direct computation as a consequence of Equation \eqref{eq:HeisenbergII}. Finally, the first equation in Proposition \ref{prop:skew3d} is shown to be satisfied by tracing back the computations that lead to Equation \eqref{eq:algebraicHeisenberg}.
\end{proof}
  
\noindent
In the previous theorem, we have assumed that $D$ has non-vanishing skew-symmetric torsion, which amounts $\alpha \neq 0$. However, if we formally set $\alpha = 0$ in the previous theorem, we obtain $D=\nabla^g$ and $\kappa s_g = -24$, and a quick computation reveals that:
\begin{equation*}
(g, \frh = \sqrt{\tfrac{48}{\kappa}},\nabla^g)\, ,
\end{equation*}

\noindent
where $g$ is a hyperbolic metric of scalar curvature $\kappa s_g = -24$, is indeed a Heterotic soliton, albeit of \emph{vanishing} torsion. However, this does not mean that \emph{all} Heterotic solitons with vanishing torsion are of this form or have $\frh$ constant. As mentioned earlier, this case is significantly more difficult and will be considered elsewhere.

%\newpage
%\renewcommand{\leftmark}{\MakeUppercase{Bibliography}}
\phantomsection
\bibliographystyle{JHEP}
%\bibliographystyle{plain}
%\newpage  

% % % % % % % % % % % % % % % % % % % % % % % % % % % % % % % % % % % % % % 
% % % % % % % % % % % % % % % % % % % % % % % % % % % % % % % % % % % % % %

\end{document}